\title{Doubly Reflected BSDEs in the predictable setting }
\author{ Ihsan Arharas\thanks{Cadi Ayyad University, Department of mathematics, Faculty of Sciences Semlalia,
B.P. 2390, Marrakesh, Morocco.
E-mail: ihsan.arharas@edu.uca.ac.ma}
 \and
Siham Bouhadou \thanks{
                  Cadi Ayyad University, Department of mathematics, Faculty of Sciences Semlalia,
B.P. 2390, Marrakesh, Morocco.
E-mail: sihambouhadou@gmail.com}
      \and 
        Youssef Ouknine              
                      \thanks{Mohammed VI Polytechnic University, Africa Business School, Lot 660, Hay Moulay Rachid, Ben Guerir 43150, Morocco, and 
Cadi Ayyad University,
Department of mathematics, Faculty of Sciences Semlalia,  
B.P. 2390, Marrakesh, Morocco.
E-mail: youssef.ouknine@um6p.ma, ouknine@uca.ac.ma}
        }
\numberwithin{equation}{section} 
\newcommand\numberthis{\addtocounter{equation}{1}\tag{\theequation}}
\newcommand{\vertiii}[1]{{\left\vert\kern-0.25ex\left\vert\kern-0.25ex\left\vert #1 
    \right\vert\kern-0.25ex\right\vert\kern-0.25ex\right\vert}}
\DeclareMathOperator*{\esssup}{ess\,sup}
\newtheorem{theorem}{Theorem}[section]
\newtheorem{lemma}{Lemma}[section]
\newtheorem{acknowledgements}{Acknowledgement}[section]
\newtheorem{notation}{Notation}[section]
\newtheorem{proposition}{Proposition}[section]
\newtheorem{remark}{Remark}[section]
\newtheorem{definition}{Definition}[section]
\newenvironment{proofa}{{\sc Proof of Lemma \ref{lim}.}}{}
\newenvironment{proof}{{\sc Proof.}}{}
\newenvironment{AMS}{}{}
\newenvironment{keywords}{}{}
\begin{document}
\newpage
\maketitle
\begin{abstract}
In this paper, we introduce a specific kind of doubly reflected Backward Stochastic Differential Equations (in short DRBSDEs), defined on probability spaces equipped with general filtration that is essentially non quasi-left continuous, where the barriers are assumed to be predictable processes. We call these equations \textit{predictable DRBSDEs}. Under a general type of \textit{Mokobodzki's condition}, we show the existence of the solution (in consideration of the driver's nature) through a Picard iteration method and a Banach fixed point theorem. By using an appropriate generalization of Itô's formula due to Gal'chouk \cite{Gal'chouk} and Lenglart \cite{Lenglart}, we provide a suitable a priori estimates which immediately implies the uniqueness of the solution.
\keywords{ Doubly reflected Backward Stochastic Differential Equations \and predictable DRBSDEs, Non quasi-left continuous \and Picard iteration methode \and Fixed point theorem.  }
\end{abstract}
\begin{keywords}
\small \textbf{Keywords:} Doubly reflected Backward Stochastic Differential Equations, predictable DRBSDEs, non quasi-left continuous, Picard iteration methode, fixed point theorem.     
\end{keywords}

\begin{AMS}
\small \textbf{AMS MSC:} \href{https://mathscinet.ams.org/msc/msc2010.html?s=60H20}{60H20}, \href{https://mathscinet.ams.org/msc/msc2010.html?s=60H30}{60H30}, \href{https://mathscinet.ams.org/msc/msc2010.html?s=65C30}{65C30}.  
\end{AMS}


\section{Introduction}

 \hspace*{0.5 cm} The theory of Backward stochastic differential equations (BSDEs, in short) has been widely studied in the literature due to their affiliation with many problems in different mathematical areas. We mention, among others, partial differential equations, theoretical economies, mathematical finance, stochastic optimal control, game theory, and other optimality problems. These equations can be traced back to Bismut \cite{Bismut} who presented them in a linear form as the equation for the conjugate variable in the stochastic Pontryagin maximum principle. Acfterwards, Pardoux and Peng \cite{Pardoux and Peng} generalized them to the nonlinear case when the noise is driven by a Brownian motion. Precisely, given a filtered probability space $(\Omega, \mathcal{F}, \mathbb{F}=(\mathcal{F})_{t \in[0,T]}, P)$ generated by an $\mathbb{R}^d$- valued Brownian motion $W$, a solution for the BSDE associated with data $(g,\xi)$ and terminal time $T$, consists of a pair of measurable processes $(Y, Z)$ in suitable spaces, mainly satisfies: 
\begin{equation} \label{BSDE}
Y_t= \xi + \int_t^T g(s,Y_s, Z_s) ds - \int_t^T Z_s dW_s;  \,\,\,\, \text{for all} \,\,\,\, t\in[0,T],
\end{equation} 
where the generator $g$ is Lipschitz continuous in $(y,z)$ and the terminal value $\xi$ is square-integrable. \\ 
\hspace*{0.5 cm} Thereafter, a new kind of BSDEs called reflected backward stochastic differential equations (RBSDEs, in short), has been introduced by El Karoui et al. \cite{El Karoui and al.} in the case of a Brownian filtration and a continuous obstacle. In their setting, the first component of the solution is forced to remain greater than or equal to a given process called obstacle or barrier. An additional nondecreasing predictable process appeared in the expression (\ref{BSDE}); the function of this additional process is to push upwards the process $Y$ in order to keep it above the barrier $\xi$. One important use of RBSDEs is their application to the pricing of American options, especially in constrained markets.\\ 
\hspace*{0.5 cm} The theory of RBSDEs has been extended to the case where the obstacle is not necessarily continuous and a larger filtration than the Brownian filtration by several authors, we quote \cite{Essaky(2008)}, \cite{Hamadene-Ouknine}, \cite{Quenez-Sulem (2014)}, \cite{Crepey-Matoussi} and references therein. In all of the mentioned works, the barrier has been assumed to be at least right-continuous. In the last few years, financial quantitative analysts have used more sophisticated mathematical concepts, such as tools from the general theory of stochastic processes, in order to describe the behavior of irregular phenomena. Using some techniques and approache from the optimal stopping theory (cf. \cite{El karoui}, \cite{kobylanski}, \cite{maingueneau}) and some results from the  general theory of processes (cf. \cite{Probabilites et Potentiel2}), Grigorova et al. in \cite{Reflected BSDEs when the obstacle is not right-continuous and optimal stopping} were the first to deal with RBSDEs in a general framework, where the obstacle is not necessarily right-continuous. They have proved existence and uniqueness of the solutions to such RBSDEs in appropriate Banach spaces. Grigorova et al. also found that the $Y$-component of the unique solution characterize the value function of an optimal stopping problem in which the risk of a
financial position $\xi$ is assessed by an $f$-conditional expectation $\varepsilon^f(.)$ (where
$f$ is a Lipschitz driver). From a perspective of mathematical finance, this discovery is significant
for the evaluation of American options with irregular pay-off in an imperfect market model, see e.g., Grigorova et al. \cite{Optimal stopping with}.

More recently, Bouhadou and Ouknine \cite{Bouhadou} considered the theory of BSDEs in the predictable setting. That is, where the filtration is non quasi-left continuous and the terminal value $\xi$ belongs to $L^2(\mathcal{F}_{T^-})$. A significant use of these equations is to generate a new family of "non linear expectations", called predictable $g$-conditional expectations. These operators present a crucial tool to study the main problem of their paper, namely the optimal stopping problem. Moreover, Bouhadou and Ouknine \cite{Bouhadou} introduced the theory of reflected BSDEs in the predictable framework, where the lower obstacle is given by a left-limited predictable process. \\
\hspace*{0.5 cm} In the current paper, we generalize the previous equations to the case of two reflecting barrier processes, that is, to a setting where, the filtration is non quasi-left continuous and the solution has to remain between two left-limited, predictable processes $\xi$ and $\zeta$, with $\xi \leq \zeta$ and $\xi_T=\zeta_T$. We establish the existence and uniqueness of the solution, in appropriate Banach space, to the following doubly RBSDE:  
\begin{multline} \label{intro-eq}
Y_\tau=\xi_T + \int_\tau^T g(s,Y_s,Z_s) ds - \int_\tau^T Z_s dW_s - (M_{T^-}-M_{\tau^-}) +  A_T-A_\tau \\ - (A'_T-A'_\tau) + B_{T^-}-B_{\tau^-}-(B'_{T^-}-B'_{\tau^-}), \,\,\,\, \text{for all} \,\,\, \tau\in \mathcal{T}_0^p.
\end{multline}

We call these equations \textit{predictable DRBSDEs}, the solution is given by $(Y,Z,M,$\\$A,B,A',B')$, where $M$ is a square integrable martingale. The predictable non-decreasing processes $A,A', B$ and $B'$ have the role to keep the solution between the two obstacles $\xi$ and $\zeta$. It is important to underline that in modelling, the predictable context is interesting since it gives more information compared to the classical cases; the work \cite{Bouhadou} provided much of the inspiration and motivation for our work. \\ 
\hspace*{0.5 cm} Carrying on the work \cite{El Karoui and al.}, Cvitanić and Karatzas have introduced the doubly reflected BSDEs (DRBSDEs, in short) in the case of continuous obstacles and a Brownian filtration in  \cite{Cvitanic and Karatzas}, and then extended to the case of a not necessarily continuous obstacles and a larger filtration in \cite{Hamadene-Hassani-Ouknine}, \cite{Crepey-Matoussi}, \cite{Essaky-Harraj-Ouknine}, \cite{Generalized Dynkin Games and Doubly reflected BSDEs with jumps}. The first paper dealing with BSDEs with two reflecting barriers that are not right-continuous, is the paper of Grigorova et al. (2018) \cite{Doubly Reflected BSDEs and E-Dynkin games}.  Precisely, motivated by the problem of pricing of game options (derivative contracts that can be terminated by both counterparties at any time before a maturity date $T$ \cite{Game Options in an Imperfect Market with Default}) in the case of imperfections in the market model, the authors showed that a doubly reflected BSDE with general filtration, where the barriers are assumed to be optional processes, admits a unique solution if and only if the so called \textit{Mokobodzki's condition} holds (there exist two supermartingales such that their difference is between $\xi$ and $\zeta$). Under an additional regularity assumptions on $\xi$ and $\zeta$, Grigorova et al. found that the solution is characterized in terms of the value of a corresponding $\varepsilon^f$-Dynkin game, i.e. a game problem over stopping times with (non-linear)
$f$-expectation, where $f$ is the driver of the doubly reflected BSDE. For the general irregular case, the authors formulate "an extension" of the previous $\varepsilon^f$-Dynkin game problem over a larger set of "stopping systems" than the set of stopping times. They demonstrated that the solution of the general DRBSDE with irregular optional barriers coincides with the common value of this extended game.

Inspired by ideas of \cite{Doubly Reflected BSDEs and E-Dynkin games}, under an extended type of \textit{Mokobodzki's condition}, we will show the existence and uniqueness of the solution to the predictable DRBSDE (\ref{intro-eq}). In the proof of our result, we use a Picard iteration method to show the existence of a solution when the driver $g$ does not depend on the solution, and then, in the general case, we construct a contraction that has a fixed point which is the solution of our predictable DRBSDE (\ref{intro-eq}). From Lenglart's theory (1980) \cite{Lenglart}, we recall that Meyer-$\sigma$-fields are embedded between predictable and optional fields. Along with the case of optional and predictable obstacles, we point out that the results of this paper can be extended to the case of Meyer-measurable processes.\\
\hspace*{0.5 cm} The rest of the paper is organized as follows: Section 2 begins by listing necessary notations and definitions. Next, we give the definition of our predictable DRBSDEs and some properties. In section 3, we investigate the question of existence and uniqueness of the solution in the case where the driver $g$ does not depend on $y$, $z$, that is, when it is given by a process $g(t)$. Therefore, we show that in this particular case the solution can be given in terms of the solution of a coupled system of two predictable RBSDEs. Under \textit{Mokobodzki's condition}, by applying a Picard iteration method we show that this system admits a solution, and hence the predictable DRBSDE admits a solution as well. In Subsection \textcolor{red}{3.2}, we provide a suitable a priori estimates, which implies the uniqueness of the solution. The general case is treated in Section 4 by using the a priori estimates of Section 3 and a fixed point argument.


\section{Notations and definitions}

\hspace*{0.5cm} Throughout the paper, we work with a finite time horizon $T>0$, a probability space $(\Omega,\mathcal{F},P)$ and a right-continuous complete filtration $\mathbb{F}=\lbrace\mathcal{F}_t:t\in[0,T]\rbrace$. Essentially, we assume that the filtration $\mathbb{F}$ is not quasi-left continuous. Let $W$ be a one-dimensional $\mathbb{F}$-Brownian motion.

\begin{notation}
 We denote by $\mathcal{P}$ (resp. $\mathcal{O}$) the predictable (resp. optional) $\sigma$-algebra on $\Omega \times [0,T]$. We recall that a stopping time $\tau$ is called predictable if there exists a sequence $(\tau_n)_{n \in\mathbb{N}}$ of stopping times that are strictly smaller that $\tau$ on $\lbrace \tau>0 \rbrace$ and increase to $\tau$ a.s. Moreover, we denote by $\mathcal{T}^p_0$ the set of all predictable stopping times $\tau$ with values in $[0,T]$. More generally, for a given stopping time $S$ in $\mathcal{T}^p_0$, we denote by $\mathcal{T}^p_S$ (resp. $\mathcal{T}^p_{S^+}$) the set of predictable stopping times $\tau$ in $\mathcal{T}^p_0 $ such that $S \leq \tau$ a.s. (resp. $S<\tau$ a.s. on $\lbrace S<T \rbrace$ and $\tau =T$ a.s. on $\lbrace S=T \rbrace$).\\

The following spaces will be frequently used in the sequel.

\begin{enumerate}
\item[•] $L^2(\mathcal{F}_{T^{-}})$ is the set of random variables which are $\mathcal{F}_{T^{-}}$-measurable and square-integrable. 
\item[•] $\mathbf{H}^2$ is the set of real-valued predictable processes $\xi$ with $$\Vert \xi \Vert^2_{\mathbf{H}^2} := E \left[ \int_0^T \vert \xi_t \vert^2 dt \right] <\infty.$$ 
\item[•] $\mathbf{S}^{2,p}$ is the vector space of real-valued predictable (not necessarily cadlag) processes $\xi$ such that 
$$ \vertiii{\xi}^2_{\mathbf{S}^{2,p}} := E[ \esssup_{\tau \in \mathcal{T}^p_0} \vert\xi_{\tau} \vert^2 ]<\infty.$$ 
 The mapping $\vertiii{.}_{\mathbf{S}^{2,p}}$ is a norm on the space $\mathbf{S}^{2,p}$. Moreover, the space $\mathbf{S}^{2,p}$ endowed with this norm is a Banach space. This follows by using similar arguments as in the proof of Proposition 2.1 in ~\cite{Reflected BSDEs when the obstacle is not right-continuous and optimal stopping}.

\item[•] $\mathbf{M}^2$ is the set of square integrable martingales $M=(M_t)_{t\in [0,T]}$ with $M_0=0$. \\
We can endow $\mathbf{M}^2$ with the norm $$ \Vert M \Vert_{\mathbf{M}^2} := E \lbrace M_T^2 \rbrace^{\frac{1}{2}}.$$
This space equipped with the scalar product $$(M,N)_{\mathbf{M}^2}=E \lbrace M_T N_T \rbrace =E \lbrace <M,N>_T \rbrace= E\lbrace [M,N]_T \rbrace, \,\,\,\, \text{for}\,\,\,\,  M,N \in \mathbf{M}^2,$$
 is an Hilbert space. Note that each martingale $M$ in $\mathbf{M}^2$ has a cadlag version (cf. \cite{Probabilites et Potentiel2} Theorem 4.VI, p. 76).
\item[•] $\mathbf{M}^{2,\bot}$ is the subspace of martingales $N \in \mathbf{M}^2$ satisfying $<N,W>_. =0$.
\end{enumerate}
\end{notation}

\begin{remark}
The condition $<N,W>_. =0$ expresses the orthogonality of $N$ (in the sense of the scalar product $(.,.)_{\mathbf{M}^2}$) with respect to all stochastic integrals of the form $\int_0^. h_s dW_s$, where $h \in \mathbf{H}^2$ (cf. e.g., \cite{Protter} IV. 3 Lemma 2, p. 180). 
\end{remark} 

  For a ladlag process $\phi$, we denote by $\phi_{t+}$ and $\phi_{t-}$ the right-hand and left-hand limit of $\phi$ at time $t$. We denote by $\Delta \phi_t := \phi_t - \phi_{t-}$ the size of left jump of $\phi$ at time $t$, and by $\Delta_+ \phi_t := \phi_{t+} - \phi_t$ the size of right jump of $\phi$ at time $t$.
  
\begin{definition}
A predictable process $(\phi_t)$ is said to be left upper-semicontinuous (resp. left lower-semicontinuous) along predictable stopping times if for each $\tau\in \mathcal{T}_0^p$, for each nondecreasing sequence of stopping times $(\tau_n)$ such that $\tau_n \uparrow \tau$ a.s., we have $\phi_\tau \geq \limsup_{n} \phi_{\tau_n}$ (resp. $\phi_\tau \leq \limsup_{n} \phi_{\tau_n})$ a.s.
\end{definition}

The following orthogonal decomposition property of martingales in $\mathbf{M}^2$ can be found in  ~\cite{Jacod and Shiryaev} (Chapter III, Lemma 4.24, p. 185):
\begin{lemma} \label{orthogonal decomposition}
For each $M\in \mathbf{M}^2$, there exists a unique couple $(Z,N)\in \mathbf{H}^2 \times \mathbf{M}^{2,\bot} $ such that
\begin{equation}
M_t= \int_0^t Z_s dW_s + N_t, \,\,\,\, \forall t \in [0,T] \,\,\,\, a.s.
\end{equation}
\end{lemma}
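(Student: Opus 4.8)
The plan is to obtain the decomposition as an orthogonal projection in the Hilbert space $\mathbf{M}^2$. First I would introduce the set of Brownian stochastic integrals
\[
\mathcal{H}^W := \left\{ \int_0^{\cdot} h_s \, dW_s \ : \ h \in \mathbf{H}^2 \right\} \subset \mathbf{M}^2,
\]
which is clearly a linear subspace by linearity of the stochastic integral. The whole argument rests on first showing that $\mathcal{H}^W$ is a \emph{closed} subspace of $\mathbf{M}^2$.

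To prove closedness I would invoke the It\^o isometry, which gives $\big\Vert \int_0^{\cdot} h\, dW \big\Vert_{\mathbf{M}^2}^2 = E\big[\int_0^T h_s^2\, ds\big] = \Vert h \Vert_{\mathbf{H}^2}^2$. Hence the map $h \mapsto \int_0^{\cdot} h\, dW$ is a linear isometry from $\mathbf{H}^2$ onto $\mathcal{H}^W$. Since $\mathbf{H}^2$ is complete and an isometric image of a complete space is complete, and since complete subspaces of a Hilbert space are closed, $\mathcal{H}^W$ is closed in $\mathbf{M}^2$.

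With $\mathcal{H}^W$ closed, the orthogonal projection theorem in the Hilbert space $\mathbf{M}^2$ (recall from the excerpt that $\mathbf{M}^2$ is Hilbert for the scalar product $(M,N)_{\mathbf{M}^2} = E[M_T N_T]$) yields a unique decomposition $M = P + N$ with $P \in \mathcal{H}^W$ and $N \in (\mathcal{H}^W)^{\perp}$. By definition of $\mathcal{H}^W$ we have $P = \int_0^{\cdot} Z_s\, dW_s$ for some $Z \in \mathbf{H}^2$, and this $Z$ is itself unique by the isometry just established. It then remains to identify the orthogonal complement: the condition $N \in (\mathcal{H}^W)^{\perp}$ means $\big(N, \int_0^{\cdot} h\, dW\big)_{\mathbf{M}^2} = 0$ for every $h \in \mathbf{H}^2$, which by the Remark preceding the statement is exactly $\langle N, W\rangle_{\cdot} = 0$, i.e. $N \in \mathbf{M}^{2,\bot}$.

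Combining these steps gives $M_T = \big(\int_0^{\cdot} Z\, dW\big)_T + N_T$ in $L^2$; since all three terms are c\`adl\`ag martingales in $\mathbf{M}^2$ vanishing at $0$, equality at the terminal time $T$ propagates to every $t$ by taking $\mathcal{F}_t$-conditional expectations and using right-continuity, so that $M_t = \int_0^t Z_s\, dW_s + N_t$ for all $t$ a.s. Uniqueness of the couple $(Z,N)$ follows from uniqueness of the orthogonal projection together with the isometry used to recover $Z$. The only genuinely delicate point is the closedness of $\mathcal{H}^W$; once that is in hand, everything else is a direct application of Hilbert-space geometry and the cited Remark.
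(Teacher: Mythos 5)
Your proof is correct: it is the standard Kunita--Watanabe / Hilbert-projection argument, and the one genuinely delicate identification --- that orthogonality of $N$ to all of $\mathcal{H}^W$ in the $(\cdot,\cdot)_{\mathbf{M}^2}$ sense is equivalent to $\langle N,W\rangle_{\cdot}=0$ --- is exactly what the paper's preceding Remark (citing Protter, IV.3, Lemma 2) licenses, so you are entitled to use it. The paper itself gives no proof of this lemma, only a citation to Jacod and Shiryaev (Chapter III, Lemma 4.24), and your argument is essentially the one underlying that reference, so it matches the intended approach while supplying the details the paper outsources.
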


\begin{definition}
A predictable process $Y=(Y)_{t\in [0,T]}$ is a predictable strong supermartingale if 
\begin{enumerate}
\item For every bounded predictable time $\tau$, $Y_\tau$ is integrable.
\item For every pair of predictable times $S$, $\tau$ such that $S \leq \tau$,
 \begin{equation}
 Y_{S} \geq \mathbb{E}[Y_\tau \vert \mathcal{F}_{S^-}] \,\,\,\, a.s.
\end{equation}   
\end{enumerate}
\end{definition}



 The following Theorem can be found in ~\cite{Probabilites et Potentiel1} (Theorem 86, p. 220).
 
\begin{theorem}(Section theorem) \label{section}
Let $X=(X_t)$ and $Y=(Y_t)$ be two optional (resp. predictable) processes. If for every finite
stopping time $\tau$ one has, $X_\tau = Y_\tau $, then the processes $(X_t)$ and $(Y_t)$ are indistinguishable.
\end{theorem}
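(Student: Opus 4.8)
The statement is the standard indistinguishability corollary of the \emph{measurable section theorem} (the selection result that in the literature is itself usually called the section theorem), and the plan is to reduce to that selection result applied to the set on which $X$ and $Y$ disagree. First I would introduce the disagreement set
\[
A := \{(\omega,t)\in\Omega\times[0,T] : X_t(\omega)\neq Y_t(\omega)\}.
\]
Because $X$ and $Y$ are both optional (resp. predictable) and $(x,y)\mapsto\mathbf{1}_{\{x\neq y\}}$ is Borel on $\mathbb{R}^2$, the process $(\omega,t)\mapsto\mathbf{1}_{\{X_t(\omega)\neq Y_t(\omega)\}}$ is optional (resp. predictable); hence $A\in\mathcal{O}$ (resp. $A\in\mathcal{P}$). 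The assertion that $X$ and $Y$ are indistinguishable is then exactly the assertion that $A$ is \emph{evanescent}, i.e. that its projection $\pi(A):=\{\omega:\exists\,t,\ (\omega,t)\in A\}$ onto $\Omega$ is $P$-negligible, so it suffices to prove $P(\pi(A))=0$.

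I would argue by contradiction. Suppose $P(\pi(A))=a>0$. The optional (resp. predictable) section theorem then yields, for each $\varepsilon\in(0,a)$, a stopping time (resp. a predictable stopping time) $\tau$ whose graph is contained in $A$ and which satisfies $P(\tau<\infty)\geq a-\varepsilon>0$. Since the graph of $\tau$ lies in $A$, on the event $\{\tau<\infty\}$ we have $(\omega,\tau(\omega))\in A$, i.e. $X_{\tau}\neq Y_{\tau}$. As $A$ lives in the finite horizon $[0,T]$, the time $\widetilde\tau:=\tau$ on $\{\tau<\infty\}$ and $\widetilde\tau:=T$ on $\{\tau=\infty\}$ is a genuine $[0,T]$-valued, hence finite, stopping time, and $X_{\widetilde\tau}\neq Y_{\widetilde\tau}$ still holds on $\{\tau<\infty\}$, a set of positive probability. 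This contradicts the hypothesis $X_{\widetilde\tau}=Y_{\widetilde\tau}$ a.s. Therefore $A$ is evanescent, $X$ and $Y$ are indistinguishable, and Theorem~\ref{section} follows.

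I expect the entire depth of the result to sit in the section (selection) theorem invoked in the second step: producing a single stopping time whose graph is confined to a prescribed optional (resp. predictable) set while controlling the mass of its projection is the nontrivial measurable-selection statement, resting on Choquet's capacitability theorem and the structure of the optional/predictable $\sigma$-algebra, and it is precisely this tool that the excerpt cites rather than reproves. Once it is granted, the reduction above is routine; the only points requiring care are the measurability of $A$, the passage to a finite stopping time $\widetilde\tau$ so that the hypothesis applies verbatim, and the use of the \emph{predictable} section theorem (on the predictable set $A$) in the predictable case, which is the natural companion selection result even though the optional version would already suffice here since the hypothesis ranges over all finite stopping times.
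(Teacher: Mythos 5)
The paper does not prove this statement itself: it quotes it verbatim from Dellacherie--Meyer (Th\'eor\`eme 86, p.~220), so there is no internal proof to compare against. Your argument --- reducing indistinguishability to evanescence of the disagreement set $A$ and invoking the optional (resp.\ predictable) measurable section theorem to produce a finite stopping time whose graph lies in $A$ with positive probability, contradicting the hypothesis --- is correct and is precisely the classical derivation found in the cited reference.
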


\begin{definition}(Driver, Lipschitz driver). A function $g$ is said to be a driver if 
\begin{enumerate}
\item[(i)] $g:\Omega \times [0,T] \times \mathbf{R}^2 \rightarrow \mathbf{R}$\\
         $(\omega,t,y,z)  \mapsto g(\omega,t,y,z)$ is $\mathcal{P} \otimes \mathcal{B}(\mathbf{R^2})-$ measurable,
\item[(ii)]$g(.,0,0) \in \mathbf{H}^2$.
\end{enumerate}
A driver $g$ is called a Lipschitz driver if moreover there exists a constant $K> 0$ such that $dP \otimes dt$-a.s., for each $(y_1,z_1)\in \mathbf{R}^2$, $(y_2,z_2)\in \mathbf{R}^2$, 
$$\vert g(w,t,y_1,z_1)-g(w,t,y_2,z_2) \vert \leq K ( \vert y_1-y_2 \vert - \vert z_1-z_2 \vert).$$
\end{definition}

We recall the definition of mutually singular random measures associated with non decreasing cadlag predictable processes from ~\cite{Generalized Dynkin Games and Doubly reflected BSDEs with jumps} (Definition 2.3., p. 5). 

\begin{definition}
Let $A=(A_t)_{0 \leq t\leq T}$ and $A'=(A'_t)_{0 \leq t\leq T}$ be two real-valued predictable non-decreasing cadlag processes with $A_0=0$, $A'_0=0$, $E[A_T]<\infty$ and $E[A'_T]<\infty$. We say that the random measures $dA_t$ and $dA'_t$ are \textit{mutually singular} (in a probabilistic sense), and we write $dA_t \perp dA'_t$, if there exists $D\in \mathcal{O}$ such that: 
\begin{equation}
E \left[ \int_0^T \textbf{1}_{D^c} dA_t \right]= E \left[ \int_0^T \textbf{1}_{D} dA'_t \right]=0,
\end{equation}
which can also be written as $ \int_0^T \textbf{1}_{D^c_t} dA_t= \int_0^T \textbf{1}_{D_t} dA'_t =0  $ a.s., where for each $t\in [0,T]$, $D_t$ is the section at time $t$ of $D$, that is, $D_t:= \lbrace \omega\in \Omega, (\omega,t)\in D \rbrace$. 
\end{definition}

\begin{definition}(Predictable admissible obstacles)
 Let $\xi=(\xi_t)_{t\in[0,T]}$ and $\zeta=(\zeta_t)_{t\in[0,T]}$ be two processes in $\mathbf{S}^{2,p}$ such that $\xi_t \leq \zeta_t$, $0\leq t \leq T$, a.s. and $\xi_T = \zeta_T$ a.s. A pair of processes $(\xi,\zeta)$ satisfying the previous properties will be called a pair of predictable admissible obstacles, or a pair of predictable admissible barriers.
\end{definition}

\subsection{Doubly RBSDE whose obstacles are predictable in the case of non quasi-left continuous filtration}
   \hspace*{0.5 cm} Let $g$ be a driver. Let $(\xi,\zeta)$ be a pair of ladlag predictable admissible obstacles.

\begin{definition} \label{def1}
A process $(Y,Z,M,A,B,A',B') \in \mathbf{S}^{2,p} \times \mathbf{H}^2 \times \mathbf{M}^{2,\perp} \times (\mathbf{S}^{2,p})^2 \times (\mathbf{S}^{2,p})^2 $ is said to be a solution to the predictable doubly RBSDE with parameters $(g,\xi,\zeta)$, where $g$ is a driver and $(\xi, \zeta)$ is a pair of predictable admissible obstacles, if 
\begin{multline}
Y_\tau=\xi_T + \int_\tau^T g(s,Y_s,Z_s) ds - \int_\tau^T Z_s dW_s - (M_{T^-}-M_{\tau^-}) +  A_T-A_\tau \\ - (A'_T-A'_\tau) + B_{T^-}-B_{\tau^-}-(B'_{T^-}-B'_{\tau^-}), \,\,\,\, \text{for all} \,\,\, \tau\in \mathcal{T}_0^p, \label{eq*}
\end{multline}
with 
\begin{enumerate}
\item[(i)] $ \xi_\tau \leq Y_\tau \leq \zeta_\tau$  a.s. for all $\tau\in \mathcal{T}_0^p$,
\item[(ii)] $A$ and $A'$ are nondecreasing right-continuous predictable processes with $A_0=A'_0=0$ and such that 
\begin{equation}
\int_0^T \textbf{1}_{\lbrace Y_{t^-}>\xi_{t^-} \rbrace} dA_t=0 \,\,\, \text{a.s.    and    } \int_0^T \textbf{1}_{\lbrace Y_{t^-}<\zeta_{t^{-}} \rbrace} dA'_t =0  \,\,\, \text{a.s.} \label{sko1}
\end{equation}
\item[(iii)] $B$ and $B'$ are nondecreasing, right-continuous predictable purely discontinuous processes with $B_{0^-}=B'_{0^-}=0$,
\begin{equation}
(Y_\tau-\xi_\tau)(B_\tau -B_{\tau^-})=0 \,\,\, \text{and} \,\,\, (Y_\tau-\zeta_\tau)(B'_\tau -B'_{\tau^-})=0 \,\,\, \text{a.s. for all $\tau \in \mathcal{T}_0^p$,} \label{sko2}
\end{equation}
\item[(iv)] $dA_t \perp dA'_t$ and $dB_t \perp dB'_t$. \label{mutualy singular}
\end{enumerate}
\end{definition}

 The equations (\ref{sko1}) and (\ref{sko2}) are called minimality conditions or Skorohod conditions.  
 
 \begin{remark}
 Note that a process $(Y,Z,M,A,B,A',B') \in \mathbf{S}^{2,p} \times \mathbf{H}^2 \times \mathbf{M}^{2,\perp} \times (\mathbf{S}^{2,p})^2 \times (\mathbf{S}^{2,p})^2 $ satisfies equation (\ref{eq*}) in the above definition if and only if, almost surely, for all $t$ in $[0,T]$, 
 \begin{multline} \label{eq**}
Y_t=\xi_T + \int_t^T g(s,Y_s,Z_s) ds - \int_t^T Z_s dW_s - (M_{T^-}-M_{t^-}) +  A_T-A_t \\- (A'_T-A'_t) +B_{T^-}-B_{t^-}-(B'_{T^-}-B'_{t^-}).
 \end{multline}
 \end{remark}

\begin{remark}
Note that, if we abandon the mutually singularity constraint (iv), the processes $A$ and $A'$ (resp. $B$ and $B'$) can increase at the same time on $\lbrace \xi_{t^-}=\zeta_{t^-} \rbrace$ (resp. on $\lbrace \xi_t=\zeta_t \rbrace$). This constraint permits us to obtain the uniqueness of the nondecreasing processes $A, A', B$ and $B'$ without the usual strict separability condition $\xi <\zeta$ (cf. e.g., \cite{El Asri and al.}). 
\end{remark}

\begin{remark}
If we rewrite the equation (\ref{eq**}) forwardly, we obtain $-(Y_\tau - Y_{\tau^{-}})=\Delta A_\tau - \Delta A'_\tau$ a.s. for each predictable stopping time $\tau \in \mathcal{T}_0^p$. Hence, the left jump process $ \Delta Y$ satisfies: $\Delta Y \equiv \Delta A' - \Delta A$. Indeed, the processes $Y$, $A$ and $A'$ are predictable. Thus, $\Delta Y$, $\Delta A$ and $\Delta A'$ are also predictable. The result follows from an application of Theorem \ref{section}.
This together with the condition $dA_t \perp dA'_t$ ensures that for each $\tau \in \mathcal{T}_0^p$, $\Delta A_\tau= (\Delta Y_{\tau})^-$ a.s., and $\Delta A'_\tau=(\Delta Y_{\tau})^+$ a.s. \label{remark2}
\end{remark}

\begin{remark}
We restrict our attention to the fact that the term $M_{-}$ in the equation (\ref{eq*}) satisfied by $Y$ is not a martingale but the predictable projection of the martingale $M$, which differs from the case of the doubly reflected BSDEs associated with optional obstacles (cf. e.g., \cite{Doubly Reflected BSDEs and E-Dynkin games}).

\end{remark}

\begin{remark}
We note also that, for each $\tau \in \mathcal{T}_0^p$, $\Delta B_\tau - \Delta B'_\tau = -({}^pY_{\tau}^+ - Y_\tau)$. This equality follows from the fact that $M$ is a right-continuous martingale, hence, $^pM=M_{-}$ and the fact that $A,A',B$ and $B'$ are predictable processes. \\
This together with the condition $dB_t \perp dB'_t$ ensures that for each $\tau \in \mathcal{T}_0^p$, $\Delta B_\tau= ({}^pY_{\tau}^+ - Y_\tau)^-$ a.s., and $\Delta B'_\tau=({}^pY_{\tau}^+ - Y_\tau)^+$ a.s. \label{remark4}
\end{remark}

\begin{remark}(Quasi-left-continuous filtration)
In the case where the filtration is quasi-left-continuous, martingales have only totally inaccessible jumps. Hence, from the equation (\ref{eq*}) we can see that for each $\tau \in \mathcal{T}_0^p$, $\Delta B_\tau - \Delta B'_\tau = - \Delta_+Y_\tau$. This together with the condition $dB_t \perp dB'_t$, ensures that for each $\tau \in \mathcal{T}_0^p$, $\Delta B_\tau = (\Delta_+Y_\tau)^-$ a.s., and $\Delta B'_\tau = (\Delta_+Y_\tau)^+$ a.s.\\
We point out that in the case of a general filtration, this property does not necessarily hold. Indeed, by equation (\ref{eq*}) for each  $\tau \in \mathcal{T}_0^p$, we have $ \Delta_+Y_\tau = \Delta M_\tau + \Delta B'_\tau - \Delta B_\tau$ a.s., and $\Delta M_\tau$ is here not necessarily equal to $0$, since in this case martingales may admit
jumps at some predictable stopping times. 
\end{remark}

\begin{proposition}
Let $g$ be a driver and $(\xi,\zeta)$ be a pair of admissible obstacles. Let $(Y,Z,M,A,B,A',B') $ be a solution to the predictable doubly reflected BSDE with parameters $(g,\xi,\zeta)$. 
\begin{enumerate}
\item[(i)] For each $\tau \in \mathcal{T}_0^p$, we have 
\begin{equation}
Y_\tau= ({}^pY_\tau^+ \vee \xi_\tau  )\wedge \zeta_\tau \,\,\, \text{a.s.}
\end{equation}
\item[(ii)] If $\xi$ (resp. $\zeta$) is right continuous, then $B=0$ (resp. $B'=0$).
\item[(iii)] If $\xi$ (resp. $\zeta$) is left upper-semicontinuous (resp. left lower-semicontinuous) along predictable stopping times, then the process $A$ (resp. $A'$) is continuous. 
\end{enumerate}
\end{proposition}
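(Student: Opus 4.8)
The plan is to read off the jump structure of the four reflecting processes from Remarks \ref{remark2} and \ref{remark4} and combine it with the minimality (Skorohod) conditions (\ref{sko1})--(\ref{sko2}). As a preliminary I would record that, since $Y,\xi,\zeta$ are predictable and $\xi_\tau \le Y_\tau \le \zeta_\tau$ holds for every $\tau\in\mathcal{T}_0^p$, the predictable section theorem (Theorem \ref{section}) upgrades these to $\xi_t\le Y_t\le \zeta_t$ for all $t$ up to indistinguishability; in particular $Y_{\tau^-}\ge \xi_{\tau^-}$, $Y_{\tau^+}\ge \xi_{\tau^+}$ and the symmetric bounds against $\zeta$ are available, and for a predictable $\tau$ the values $\xi_\tau,\zeta_\tau,Y_\tau$ are $\mathcal{F}_{\tau^-}$-measurable with ${}^pY^+_\tau=\mathbb{E}[Y_{\tau^+}\mid\mathcal{F}_{\tau^-}]$. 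For (i), I would fix $\tau$ and argue pointwise according to the sign of ${}^pY^+_\tau-Y_\tau$, using the jump identities $\Delta B_\tau=({}^pY^+_\tau-Y_\tau)^-$ and $\Delta B'_\tau=({}^pY^+_\tau-Y_\tau)^+$ of Remark \ref{remark4}. If $Y_\tau>{}^pY^+_\tau$ then $\Delta B_\tau>0$, so (\ref{sko2}) forces $Y_\tau=\xi_\tau$, and since ${}^pY^+_\tau<\xi_\tau\le\zeta_\tau$ the right-hand side $({}^pY^+_\tau\vee\xi_\tau)\wedge\zeta_\tau$ collapses to $\xi_\tau=Y_\tau$. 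Symmetrically, $Y_\tau<{}^pY^+_\tau$ gives $\Delta B'_\tau>0$, hence $Y_\tau=\zeta_\tau$ and again the identity; and if $Y_\tau={}^pY^+_\tau$ both jumps vanish and, using $\xi_\tau\le Y_\tau\le\zeta_\tau$, the right-hand side equals $Y_\tau$. The three disjoint cases exhaust $\Omega$ and yield the formula for every $\tau$.

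For (ii), I would prove $\Delta B_\tau=0$ for every predictable $\tau$. Assume $\Delta B_\tau>0$; by Remark \ref{remark4} this forces ${}^pY^+_\tau<Y_\tau$, and (\ref{sko2}) gives $Y_\tau=\xi_\tau$. On the other hand, right-continuity of $\xi$ gives $Y_{\tau^+}\ge \xi_{\tau^+}=\xi_\tau$, so taking predictable projections and pulling the $\mathcal{F}_{\tau^-}$-measurable $\xi_\tau$ out yields ${}^pY^+_\tau=\mathbb{E}[Y_{\tau^+}\mid\mathcal{F}_{\tau^-}]\ge \xi_\tau=Y_\tau$, a contradiction. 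Hence $\Delta B_\tau=0$ for all predictable $\tau$, and since $B$ is predictable and purely discontinuous the section theorem gives $B\equiv 0$. The claim $B'=0$ when $\zeta$ is right-continuous is identical, now using $\Delta B'_\tau>0\Rightarrow Y_\tau=\zeta_\tau$ together with $Y_{\tau^+}\le \zeta_\tau$, so that ${}^pY^+_\tau\le \zeta_\tau=Y_\tau$.

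For (iii), I would prove $\Delta A_\tau=0$ for every predictable $\tau$, which together with right-continuity of $A$ yields continuity. Assume $\Delta A_\tau>0$. By Remark \ref{remark2}, $\Delta A_\tau=(\Delta Y_\tau)^-$, so $Y_{\tau^-}>Y_\tau$; and the minimality condition (\ref{sko1}), applied to the atom that $dA$ places at $\tau$, forces $Y_{\tau^-}=\xi_{\tau^-}$ (using $Y_{\tau^-}\ge\xi_{\tau^-}$). Combining with $Y_\tau\ge\xi_\tau$ gives $\xi_\tau\le Y_\tau<Y_{\tau^-}=\xi_{\tau^-}$, i.e. $\xi_\tau<\xi_{\tau^-}$; but left upper-semicontinuity of $\xi$ along predictable times, applied to an announcing sequence $\tau_n\uparrow\tau$, gives $\xi_\tau\ge\limsup_n\xi_{\tau_n}=\xi_{\tau^-}$, a contradiction. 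Thus $\Delta A\equiv 0$ and $A$ is continuous; the statement for $A'$ follows symmetrically from (\ref{sko1}), Remark \ref{remark2} ($\Delta A'_\tau=(\Delta Y_\tau)^+$), the bound $Y\le\zeta$, and the left lower-semicontinuity of $\zeta$.

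The main obstacle I anticipate is the careful bookkeeping in the jump relations rather than any deep difficulty: getting every sign right in the identities of Remarks \ref{remark2} and \ref{remark4}, justifying the passage from the integral (measure) form of the minimality conditions (\ref{sko1}) to a statement about the single jump of $A$ (resp. $A'$) at a predictable time, and correctly exploiting the $\mathcal{F}_{\tau^-}$-measurability of $\xi_\tau$ and $\zeta_\tau$ to pull them out of the conditional expectation defining ${}^pY^+_\tau$. Once these are handled cleanly, all three assertions reduce to the short contradiction arguments sketched above.
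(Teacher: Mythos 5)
Your proposal is correct and follows essentially the same route as the paper: all three assertions are derived from the jump identities of Remarks \ref{remark2} and \ref{remark4} combined with the Skorohod conditions (\ref{sko1})--(\ref{sko2}), with (i) handled by a three-way case split (you split on the sign of ${}^pY^+_\tau-Y_\tau$ where the paper splits on the position of $Y_\tau$ relative to the obstacles, which is equivalent) and (ii), (iii) reduced to showing the relevant jump vanishes at every predictable time. The only cosmetic difference is that you phrase (ii) and (iii) as contradiction arguments where the paper computes directly.
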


\begin{proof}
Let $\tau \in \mathcal{T}_0^p$. By Remark \ref{remark4}, we have $\Delta B_\tau= ({}^pY_{\tau}^+ - Y_\tau)^-$ and $\Delta B_\tau= ({}^pY_{\tau}^+ - Y_\tau)^+$ a.s. Since $B$ and $B'$ satisfy the Skorokhod condition (\ref{sko2}), we get
\begin{equation*}
({}^pY_{\tau}^+ - Y_\tau)^- = \textbf{1}_{\lbrace Y_\tau = \xi_\tau \rbrace} ({}^pY_{\tau}^+ - Y_\tau)^- \,\,\,\text{and} \,\,\, ({}^pY_{\tau}^+ - Y_\tau)^+ = \textbf{1}_{\lbrace Y_\tau = \zeta_\tau \rbrace} ({}^pY_{\tau}^+ - Y_\tau)^+.
\end{equation*}
Hence, on the set $\lbrace \xi_\tau <Y_\tau<\zeta_\tau \rbrace$, we have $Y_\tau={}^pY_{\tau}^+$ a.s., which implies that $( {}^pY_\tau^+ \vee \xi_\tau  )\wedge \zeta_\tau = Y_\tau$ a.s. Now, on the set $\lbrace \xi_\tau <Y_\tau=\zeta_\tau \rbrace$, we have
$({}^pY_{\tau}^+ - Y_\tau)^-=0$ a.s., which gives ${}^pY_{\tau}^+ \geq Y_\tau = \zeta_\tau > \xi_\tau$ a.s. Hence,
$({}^pY_\tau^+ \vee \xi_\tau  )\wedge \zeta_\tau ={}^pY_\tau^+ \wedge \zeta_\tau =\zeta_\tau=Y_\tau$ a.s. \\
Similarly, on the set $\lbrace \xi_\tau =Y_\tau<\zeta_\tau \rbrace$, we have $({}^pY_\tau^+ \vee \xi_\tau  )\wedge \zeta_\tau = Y_\tau$ a.s. The first assertion thus holds.

We now prove the second assertion. Suppose that $\xi$ is right-continuous. Let $\tau \in \mathcal{T}_0^p$. We show that $\Delta B=0$ a.s. From the remarks above, we have 
\begin{equation*}
\Delta B_\tau = \textbf{1}_{\lbrace Y_\tau = \xi_\tau \rbrace} ({}^pY_{\tau}^+ - Y_\tau)^-= \textbf{1}_{\lbrace Y_\tau = \xi_\tau \rbrace} ({}^pY_{\tau}^+ - \xi_\tau)^- = \textbf{1}_{\lbrace Y_\tau = \xi_\tau \rbrace} ({}^pY_{\tau}^+ - {}^p\xi_{\tau}^+)^- \,\,\,\text{a.s.,}
\end{equation*}
where the last inequality follows from the fact that $\xi$ is right-continuous predictable process. Since $Y \geq \xi$, we derive that, for all $\tau \in \mathcal{T}_0^p$, $\Delta B_\tau=0$ a.s. Since, $B$ is purely discontinuous non decreasing process, null at $0$, it follows that $B=0$. Similarly, it can be shown that if $\zeta$ is right-continuous, then $B'=0$. Hence the second assertion holds. 

It only remains to prove the third assertion. Suppose that $\xi$ is left upper-semicont\\inuous along predictable stopping times. We show $\Delta A_\tau=0$ a.s. Let $\tau \in \mathcal{T}_0^p$. By Remark \ref{remark2} and condition (\ref{sko1}), we derive that 
\begin{equation}
\Delta Y_\tau= - \Delta A_\tau + \Delta A'_\tau = - \Delta A_\tau \textbf{1}_{\lbrace Y_{\tau^-}=\xi_{\tau^-}\rbrace \cap K} + \Delta A'_\tau  \textbf{1}_{\lbrace Y_{\tau^-}=\zeta_{\tau^-} \rbrace \cap K'} \,\,\, \text{a.s.,} \label{deltaY}
\end{equation}
where $K= \lbrace \Delta A_\tau >0 \rbrace$ and $K'= \lbrace \Delta A'_\tau >0 \rbrace$. Note that $K$ and $K'$ belong to $\mathcal{F}_{\tau^-}$. Since $dA_t \perp dA'_t$ , we get $K \cap K' = \emptyset$ a.s. Hence, on ${\lbrace Y_{\tau^-}=\xi_{\tau^-}\rbrace \cap K}$, we have 
\begin{equation*}
\Delta Y_\tau= - \Delta A_\tau \leq 0 \,\,\,\,\,\, \text{a.s.} 
\end{equation*}
Since $\xi$ is left-u.s.c along stopping times, we hence drive that on ${\lbrace Y_{\tau^-}=\xi_{\tau^-}\rbrace \cap K}$, we have 
\begin{equation*}
\xi_{\tau^-} \leq \xi_\tau \leq Y_\tau \leq Y_{\tau^-} 	\,\,\,\,\,\, \text{a.s.,}
\end{equation*}
and the inequalities are even equalities (still on the set ${\lbrace Y_{\tau^-}=\xi_{\tau^-}\rbrace \cap K}$). Hence, $\Delta Y_\tau=0$ a.s. on ${\lbrace Y_{\tau^-}=\xi_{\tau^-}\rbrace \cap K}$. By (\ref{deltaY}), we derive that $\Delta A_\tau=0$ a.s. This equality being true for every predictable stopping time $\tau \in \mathcal{T}_0^p$, it follows that $A$ is continuous. Similarly, it can be shown that if $\zeta$ is left lower-semicontinuous along predictable stopping times, then $A'$ is continuous, and the proof is complete.
\end{proof}

\begin{remark}
If $\xi$ and $\zeta$ are predictable obstacles such that $\xi$ is l.u.s.c. and $\zeta$ is l.l.s.c. along stopping times. Then $Y$ is left-continuous. This is a direct consequence of the third assertions of the proposition above and Remark  \ref{remark2}. 
\end{remark}

  We now give a necessary condition for the existence of a solution of the doubly reflected BSDE from Definition \ref{def1}.

\begin{definition}(\textsc{Mokobodzki's condition in the Predictable setting}) \\
Let $(\xi,\zeta) \in \mathbf{S}^{2,p} \times \mathbf{S}^{2,p}$ be a pair of predictable admissible barriers. We say that the pair $(\xi,\zeta)$ satisfies Mokobodzki's condition if there exist two nonnegative predictable strong supermartingales $H^p$ and $\overline{H}^p$ in $ \mathbf{S}^{2,p}$ such that:
\begin{equation}
\xi_t \leq H^p_t- \overline{H}_t^p \leq \zeta_t \,\,\,\,\,\,\,\, 0\leq t \leq T \,\,\,\,\,\, \text{a.s.}
\end{equation}
\end{definition}

\begin{lemma}
 Let $g \in \mathbf{H}^2$. Let $(\xi,\zeta)$ be a pair of predictable admissible barriers. Then, Mokobodzki's condition is a necessary condition for the existence of a solution to the predictable DRBSDE associated with the driver process $(g_t)$ and the barriers $\xi$ and $\zeta$.
\end{lemma}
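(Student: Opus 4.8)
The plan is to assume a solution $(Y,Z,M,A,B,A',B')$ to the predictable DRBSDE with data $(g,\xi,\zeta)$ exists, and to produce directly the two nonnegative predictable strong supermartingales demanded by Mokobodzki's condition. The guiding observation is that $Y$ already satisfies $\xi_\tau \leq Y_\tau \leq \zeta_\tau$ for all $\tau \in \mathcal{T}_0^p$ by assertion (i) of Definition \ref{def1}, so it suffices to exhibit a decomposition $Y = H^p - \overline{H}^p$ into two nonnegative predictable strong supermartingales in $\mathbf{S}^{2,p}$. Indeed, the inequalities then read $\xi_\tau \leq H^p_\tau - \overline{H}^p_\tau \leq \zeta_\tau$ for every predictable stopping time, and since all processes involved are predictable, Theorem \ref{section} promotes this to the statement ``$\xi_t \leq H^p_t - \overline{H}^p_t \leq \zeta_t$ for $0 \leq t \leq T$ a.s.'', which is exactly Mokobodzki's condition.

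First I would derive a backward conditional-expectation representation of $Y$. Setting $m_t := \int_0^t Z_s\,dW_s + M_t$, which lies in $\mathbf{M}^2$ by Lemma \ref{orthogonal decomposition}, the two martingale contributions in (\ref{eq**}) combine into $m_{T^-} - m_{t^-}$. The key point is that the left-limit process $m_{-}$ is a \emph{predictable strong martingale}: for predictable $S \leq \tau$ one has $\mathbb{E}[m_{\tau^-}\mid\mathcal{F}_{S^-}] = m_{S^-}$, which follows from optional sampling together with the identity $m_{\tau^-} = \mathbb{E}[m_\tau\mid\mathcal{F}_{\tau^-}]$ valid at a predictable time $\tau$. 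In particular $\mathbb{E}[m_{T^-} - m_{t^-}\mid\mathcal{F}_{t^-}] = 0$, so applying $\mathbb{E}[\cdot\mid\mathcal{F}_{t^-}]$ to (\ref{eq**}) and using that $Y_t$ is predictable, hence $\mathcal{F}_{t^-}$-measurable, yields
\[
Y_t = \mathbb{E}\Big[\xi_T + \int_t^T g_s\,ds + (A_T - A_t) - (A'_T - A'_t) + (B_{T^-} - B_{t^-}) - (B'_{T^-} - B'_{t^-}) \,\Big|\, \mathcal{F}_{t^-}\Big].
\]

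Next I would split every term according to its sign and its reflection role, putting the upward pushes $A,B$ and the positive parts on one side and the downward pushes $A',B'$ and the negative parts on the other. Defining
\[
H^p_t := \mathbb{E}\Big[\xi_T^+ + \int_t^T g_s^+\,ds + (A_T - A_t) + (B_{T^-} - B_{t^-}) \,\Big|\, \mathcal{F}_{t^-}\Big],
\]
\[
\overline{H}^p_t := \mathbb{E}\Big[\xi_T^- + \int_t^T g_s^-\,ds + (A'_T - A'_t) + (B'_{T^-} - B'_{t^-}) \,\Big|\, \mathcal{F}_{t^-}\Big],
\]
the representation above gives $Y = H^p - \overline{H}^p$. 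Each is the conditional expectation given $\mathcal{F}_{t^-}$ of a nonnegative $\mathcal{F}_{T^-}$-measurable variable, hence nonnegative. Moreover each has the form $\eta_{t^-} - I_t$, where $\eta$ is the $\mathbf{M}^2$-martingale closed by the corresponding nonnegative terminal variable $\Phi$ (for $H^p$, $\Phi = \xi_T^+ + \int_0^T g_s^+\,ds + A_T + B_{T^-}$) and $I_t = \int_0^t g_s^+\,ds + A_t + B_{t^-}$ is predictable nondecreasing; since $\eta_{-}$ is a predictable strong martingale and $I_\tau \geq I_S$ with $I_S$ being $\mathcal{F}_{S^-}$-measurable, one gets $\mathbb{E}[H^p_\tau\mid\mathcal{F}_{S^-}] = \eta_{S^-} - \mathbb{E}[I_\tau\mid\mathcal{F}_{S^-}] \leq \eta_{S^-} - I_S = H^p_S$, so $H^p$ is a predictable strong supermartingale, and likewise $\overline{H}^p$.

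Finally I would address integrability. Here $\Phi \in L^2$ because $\xi_T^\pm \in L^2$ (as $\xi \in \mathbf{S}^{2,p}$), $\int_0^T g_s^+\,ds \leq \int_0^T |g_s|\,ds \in L^2$ (Cauchy--Schwarz and $g \in \mathbf{H}^2$), and $A_T, B_{T^-} \in L^2$ (as $A, B \in \mathbf{S}^{2,p}$); since $\eta \geq 0$ and $I \geq 0$ we have $0 \leq H^p_\tau \leq \eta_{\tau^-} \leq \sup_s \eta_s$, so $\esssup_{\tau \in \mathcal{T}_0^p} |H^p_\tau|^2 \leq (\sup_s \eta_s)^2$, which is integrable by Doob's inequality; hence $H^p \in \mathbf{S}^{2,p}$, and the same argument gives $\overline{H}^p \in \mathbf{S}^{2,p}$. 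Combining the four properties with $\xi_\tau \leq Y_\tau = H^p_\tau - \overline{H}^p_\tau \leq \zeta_\tau$ and Theorem \ref{section} concludes. I expect the main obstacle to be the careful handling of predictable projections and left limits of martingales in this non quasi-left-continuous framework, specifically the justification that $\mathbb{E}[\cdot\mid\mathcal{F}_{t^-}]$ annihilates the combined martingale term $m_{T^-}-m_{t^-}$ via the predictable strong martingale property of $m_{-}$; once this representation is in hand, the sign-splitting and the verification of the three structural properties are routine.
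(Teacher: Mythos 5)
Your proposal is correct and follows essentially the same route as the paper: take the conditional expectation of equation (\ref{eq**}) with respect to $\mathcal{F}_{t^-}$, kill the martingale increment via the predictable stopping theorem (your ``predictable strong martingale'' property of $m_-$), and split the remaining terms by sign into the same two nonnegative predictable strong supermartingales $H^p$ and $\overline{H}^p$. Your additional verifications of the supermartingale property and of membership in $\mathbf{S}^{2,p}$ are details the paper leaves implicit, but the argument is identical in substance.
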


\begin{proof}
Let $(Y,Z,M,A,B,A',B') \in \mathbf{S}^{2,p} \times \mathbf{H}^2 \times \mathbf{M}^{2,\perp} \times (\mathbf{S}^{2,p})^2 \times (\mathbf{S}^{2,p})^2 $ be a solution to the predictable DRBSDE associated with driver $g$ and the barriers $\xi$ and $\zeta$. Since $Y$ is predictable, we have $E(Y_t \vert \mathcal{F}_{t^-})= Y_t$. From this and equality (\ref{eq**}), it follows that  
\begin{align*}
Y_t &=\mathbb{E}(\xi_T^+ + \int_t^T g^+(s,Y_s,Z_s) ds + A_T-A_t + B_{T^-}- B_{t^-}\vert \mathcal{F}_{t^-}) \\
 &+ \mathbb{E}(\xi_T^- + \int_t^T g^-(s,Y_s,Z_s) ds + A'_T-A'_t + B'_{T^-}- B'_{t^-}\vert \mathcal{F}_{t^-}) 
\\ & - E(M_{T^-} - M_{t^-} \vert \mathcal{F}_{t^-}).
\end{align*}
Otherwise, by noting that $M$ is a (cadlag) uniformly integrable martingale and by applying the predictable stopping theorem in \cite{Nikeghbali} (Theorem 4.5, p 358), we obtain that the last term in the equality above is equal to
zero. Hence, $Y=H^p-\overline{H}^{p}$, where $H^p$ and $\overline{H}^{p}$ are the two non-negative predictable strong supermartingales defined by
\begin{equation*}
H_t^p := \mathbb{E}(\xi_T^+ + \int_t^T g^+(s,Y_s,Z_s) ds + A_T-A_t + B_{T^-}- B_{t^-}\vert \mathcal{F}_{t^-});
\end{equation*}

\begin{equation*}
\overline{H}_t^{p} := \mathbb{E}(\xi_T^- + \int_t^T g^-(s,Y_s,Z_s) ds + A'_T-A'_t + B'_{T^-}- B'_{t^-}\vert \mathcal{F}_{t^-}).
\end{equation*}

Since $\xi \leq Y \leq \zeta$, we get $\xi \leq H^p-H^{'p} \leq \zeta$, which guarantee that the Mokobodzki's condition holds. 
\end{proof}


\section{The (y,z)-independent case}

\hspace*{0.5 cm} Let $(\xi,\zeta)$ be a pair of predictable admissible barriers. Let $g$ be a driver. We assume that $g$ does not depend on $(y, z)$ i.e., $P$-a.s., $g(t, \omega, y, z) \equiv g (t, \omega)$, for any $t, y$ and $z$.\\
 \hspace*{0.5 cm} In this section, we are going to prove the existence and uniqueness, under the above assumptions on $g$, $\xi$ and $\zeta$, of the solution to the predictable doubly RBSDE from Definition \ref{def1}. The idea of the proof is the same as in the paper of Grigorova et al.  \cite{Doubly Reflected BSDEs and E-Dynkin games}, in which the authors proved the results for the doubly RBSDE with a not necessarily continuous optional obstacles and general filtration.

\subsection{Existence of a solution to the predictable DRBSDE with driver process $(g_t)$}

\hspace*{0.5 cm} As a first step, we suppose that there exists a solution of the predictable DRBSDE associated with the driver $g$ and we show that up to the process $E(\xi_T + \int_t^T g_s\, ds \vert \mathcal{F}_{t^-})$, the first component of this solution can be written as the difference of the solutions of two coupled predictable reflected BSDEs. \\

We introduce the following operator:
\begin{definition}(Operator induced by a predictable RBSDE with driver 0)\\ \label{operator}
Let $\xi$ be a process in  $\mathbf{S}^{2,p}$. We denote by $\mathcal{P}re[\xi]$ the first component of the solution to the predictable BSDE from Definition \ref{def2} in the case where the driver is $0$ (see Definition \ref{def2} in the Appendix). 
\end{definition}
\begin{remark}
Note that by Proposition \ref{existence PBSDE} in the Appendix, the operator $\mathcal{P}re : \xi \rightarrow \mathcal{P}re[\xi]$ is well defined on $\mathbf{S}^{2,p}$.
\end{remark}

Let $(Y,Z,M,A,B,A',B') \in \mathbf{S}^{2,p} \times \mathbf{H}^2 \times \mathbf{M}^{2,\perp} \times (\mathbf{S}^{2,p})^2 \times (\mathbf{S}^{2,p})^2 $ be a solution to the predictable DRBSDE associated with driver $g$ and the barriers $\xi$ and $\zeta$.  Let $\tilde{Y}$ the predictable process defined by $\tilde{Y}_t := Y_t - \mathbb{E}[\xi_T + \int_t^T g_s ds \vert \mathcal{F}_{t^-}]$, for all $t\in [0,T]$. From this definition together with equation (\ref{eq**}), we get

\begin{equation}
\tilde{Y}_t= J^{g,p}_t- \bar{J}^{g,p}_t \,\,\,\, \text{for all}\,\,\,\, t\in [0,T] \,\,\,\,\text{a.s.}, \label{J-J'}
\end{equation}

where the  processes $J^{g,p}$ and $\bar{J}^{g,p}$ are defined, for all $t \in [0,T]$, by 
\begin{equation}
J^{g,p}_t := \mathbb{E}[A_T-A_t +B_{T^-}- B_{t^-} \vert \mathcal{F}_{t^-}] \,\,\,\, \text{and} \,\,\,\,
 \bar{J}^{g,p}_t := \mathbb{E}[A'_T-A'_t +B'_{T^-}- B'_{t^-} \vert \mathcal{F}_{t^-}].
\end{equation}
\begin{remark} \label{J,Jbar}
Note that since $A, \,\,A', \,B$ and $B'$ are non-decreasing processes belong to $\mathbf{S}^{2,p}$, $J^{g,p}$ and $\bar{J}^{g,p}$ are two nonnegative predictable strong supermartingales in $\mathbf{S}^{2,p}$ such that $J^{g,p}_T=\bar{J}^{g,p}_T=0$ a.s.
\end{remark}

We introduce the following predictable processes (which also depend on the process $g$):
 \begin{equation}
 \tilde{\xi}_t^{g,p} := \xi_t - \mathbb{E}[\xi_T + \int_t^T g_s ds \vert \mathcal{F}_{t^-}], \,\,\,\,\, \tilde{\zeta}_t^{g,p} := \zeta_t - \mathbb{E}[\zeta_T + \int_t^T g_s ds \vert \mathcal{F}_{t^-}], \,\,\,\,\, 0 \leq t \leq T. 
 \end{equation}
 
\begin{remark} \label{xi,zeta}
Note that since $g \in \mathbf{H}^2$ and $\xi \in \mathbf{S}^{2,p}$, $\tilde{\xi}^{g,p}$ and $\tilde{\zeta}^{g,p}$ belong to $\mathbf{S}^{2,p}$ . Moreover, we have $\tilde{\xi}^{g,p}_T=\tilde{\zeta}^{g,p}_T=0$ a.s. 
\end{remark}

In the following lemma, we prove that the processes $J^{g,p}$ and $\bar{J}^{g,p}$ satisfy a coupled system of predictable reflected BSDEs.

\begin{lemma} \label{implication}
Let $Y \in \mathbf{S}^{2,p}$ be the first component of a solution of the predictable doubly RBSDE with parameters $(g,\xi,\zeta)$, where $g$ is a driver and $(\xi, \zeta)$ is a pair of predictable admissible obstacles. We then have 
$$Y_t= J^{g,p}_t - \bar{J}^{g,p}_t + \mathbb{E}[\xi_T + \int_t^T g_s ds \vert \mathcal{F}_{t^-}], \,\, 0\leq t\leq T, \,\,\, \text{a.s.},$$ 
where the process $J^{g,p}$ and $\bar{J}^{g,p}$ satisfy the following coupled system of predictable reflected BSDEs:
\begin{equation} \label{coupled}
J^{g,p}= \mathcal{P}re[(\bar{J}^{g,p} + \tilde{\xi}^{g,p} ) \textbf{1}_{[0,T)}]; \,\,\,\, \bar{J}^{g,p}= \mathcal{P}re[(J^{g,p}-\tilde{\zeta}^{g,p} ) \textbf{1}_{[0,T)}],
\end{equation}
where $\mathcal{P}re$ is the operator associated to the predictable RBSDE with driver 0 (cf. Definition  \ref{operator}).
\end{lemma}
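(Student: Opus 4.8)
The first displayed identity requires no new work: it is exactly \eqref{J-J'}, obtained by adding $\mathbb{E}[\xi_T + \int_t^T g_s\,ds \mid \mathcal{F}_{t^-}]$ to both sides of the definition $\tilde Y_t := Y_t - \mathbb{E}[\xi_T + \int_t^T g_s\,ds \mid \mathcal{F}_{t^-}]$. The real task is to identify $J^{g,p}$ and $\bar J^{g,p}$ as the images under $\mathcal{P}re$ of the stated obstacles. Since $\mathcal{P}re$ is the solution operator of the predictable reflected BSDE with driver $0$ (Definition \ref{operator}), the plan is to verify, for $J^{g,p}$, the defining features of that solution with lower obstacle $h:=(\bar J^{g,p}+\tilde\xi^{g,p})\mathbf{1}_{[0,T)}$: that $J^{g,p}$ is a predictable strong supermartingale admitting a decomposition of the RBSDE form with driver $0$, that it dominates $h$, that it has the correct terminal value, and that its nondecreasing components obey the Skorohod conditions relative to $h$. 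The argument for $\bar J^{g,p}$ is then entirely symmetric.

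The first step is the decomposition. Because $A$ and $B_{-}$ are predictable, $A_\tau + B_{\tau^-}$ is $\mathcal{F}_{\tau^-}$-measurable, so for every $\tau \in \mathcal{T}_0^p$
\[
J^{g,p}_\tau = \mathbb{E}[A_T + B_{T^-}\mid \mathcal{F}_{\tau^-}] - (A_\tau + B_{\tau^-}) = m_{\tau^-} - A_\tau - B_{\tau^-},
\]
where $m_t := \mathbb{E}[A_T + B_{T^-}\mid \mathcal{F}_t]$ is the càdlàg martingale whose predictable projection at $\tau$ is $m_{\tau^-}$. Writing $\tilde M := m - m_0$ and decomposing it via Lemma \ref{orthogonal decomposition} as $\tilde M = \int_0^\cdot \tilde Z_s\,dW_s + \tilde N$ with $\tilde N \in \mathbf{M}^{2,\perp}$, and using $m_{T^-} = A_T + B_{T^-}$ (again by predictability), a direct rearrangement shows that $J^{g,p}$ satisfies the predictable RBSDE equation with driver $0$, terminal value $0$, martingale part $\tilde M$, and nondecreasing processes $A$ and $B$. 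By Remark \ref{J,Jbar}, $J^{g,p}$ is moreover a nonnegative predictable strong supermartingale with $J^{g,p}_T = 0$, matching the terminal value of $h$ since $\mathbf{1}_{[0,T)}(T)=0$.

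The second step is domination and minimality. From $Y \geq \xi$ and \eqref{J-J'} we get $J^{g,p}-\bar J^{g,p} = \tilde Y \geq \tilde\xi^{g,p}$, that is $J^{g,p} \geq \bar J^{g,p}+\tilde\xi^{g,p} \geq h$, with equality at $T$. For the Skorohod conditions the key observation is that the conditional-expectation terms cancel in the left limits: writing $U_t := \mathbb{E}[\xi_T + \int_t^T g_s\,ds \mid \mathcal{F}_{t^-}]$ (which coincides with the term built from $\zeta_T$, since $\xi_T=\zeta_T$), one finds $J^{g,p}_{t^-} - (\bar J^{g,p}+\tilde\xi^{g,p})_{t^-} = Y_{t^-} - \xi_{t^-}$ and $J^{g,p}_\tau - h_\tau = Y_\tau - \xi_\tau$ on $\{\tau<T\}$. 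Hence the sets $\{J^{g,p}_{t^-} > (\bar J^{g,p}+\tilde\xi^{g,p})_{t^-}\}$ and $\{Y_{t^-} > \xi_{t^-}\}$ coincide, so \eqref{sko1} and \eqref{sko2} for $A$ and $B$ yield the analogous conditions for $J^{g,p}$ relative to $h$, identifying $J^{g,p} = \mathcal{P}re[h]$.

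Finally, the second identity follows by repeating the argument with $Y \leq \zeta$ in place of $Y\geq\xi$, the processes $A'$, $B'$ in place of $A$, $B$, and obstacle $(J^{g,p}-\tilde\zeta^{g,p})\mathbf{1}_{[0,T)}$; the corresponding cancellation gives $\bar J^{g,p}_{t^-} - (J^{g,p}-\tilde\zeta^{g,p})_{t^-} = \zeta_{t^-} - Y_{t^-}$, so the sets match $\{Y_{t^-}<\zeta_{t^-}\}$ and \eqref{sko1}--\eqref{sko2} for $A'$, $B'$ transfer. I expect the main obstacle to be the bookkeeping in the first step: checking that the Mertens-type decomposition $m_{\tau^-} - A_\tau - B_{\tau^-}$ genuinely reproduces the precise RBSDE form associated with $\mathcal{P}re$ (including the split of $\tilde M$ into its $W$-integral and its orthogonal part) while handling the left limits and the truncation $\mathbf{1}_{[0,T)}$ at the terminal time consistently.
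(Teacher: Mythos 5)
Your proposal is correct and follows essentially the same route as the paper: the paper likewise obtains the RBSDE form of $J^{g,p}$ from the explicit expression $\mathbb{E}[A_T+B_{T^-}\mid\mathcal{F}_{t^-}]-A_t-B_{t^-}$ (identified with the Mertens decomposition) combined with the orthogonal decomposition of Lemma \ref{orthogonal decomposition}, and transfers the Skorokhod conditions through the identity $Y-\xi=J^{g,p}-\bar J^{g,p}-\tilde\xi^{g,p}$. The only cosmetic difference is that you write the martingale part directly as $m_{\tau^-}$ instead of first invoking the Mertens decomposition theorem for class $(\mathcal{D})$ predictable strong supermartingales and then matching the components by uniqueness.
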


\begin{proof}
From the definition of $\tilde{Y}$ and equality (\ref{J-J'}), it follows that $$Y_t= J^{g,p}_t - \bar{J}^{g,p}_t + \mathbb{E}[\xi_T + \int_t^T g_s ds \vert \mathcal{F}_{t^-}]\,\, \,\, 0\leq t\leq T \,\,\, \text{a.s.}$$
By Remark \ref{J,Jbar} and \ref{xi,zeta}, the processes $(\bar{J}^{g,p} + \tilde{\xi}^{g,p} ) \textbf{1}_{[0,T)}$ and  $(J^{g,p}-\tilde{\zeta}^{g,p} ) \textbf{1}_{[0,T)}$ belong to $\mathbf{S}^{2,p}$. Moreover, from $\xi \leq Y \leq \zeta$ and the definitions of $\tilde{Y}$, $\tilde{\xi}^{g,p}$, $\tilde{\zeta}^{g,p}$, we get $\tilde{\xi}^{g,p}\leq \tilde{Y} = J^{g,p} - \bar{J}^{g,p} \leq \tilde{\zeta}^{g,p}.$ Therefore, $$J^{g,p} \geq \bar{J}^{g,p} + \tilde{\xi}^{g,p} \,\,\,\, \text{and} \,\,\,\, \bar{J}^{g,p} \geq J^{g,p} - \tilde{\zeta}^{g,p}.$$

By the above, $J^{g,p}$ and $\bar{J}^{g,p}$ are two nonnegative predictable strong supermartingales in $\mathbf{S}^{2,p}$, hence of class $(\mathcal{D})$ (i.e. $\lbrace J^{g,p}_\tau; \tau \in \mathcal{T}_0^p \rbrace$ and $\lbrace \bar{J}^{g,p}; \tau \in \mathcal{T}_0^p \rbrace $ are uniformly integrable). Applying Mertens decomposition for predictable strong supermartingales of class $(\mathcal{D})$ (see \cite{Meyer_Un cours sur les integrales stochastiques}, p. 143), we conclude that 
\begin{align}
J^{g,p}  = N_{t^-} - A^1_t - B^1_{t^-}; \,\,\,\,\,\,\,\, \bar{J}^{g,p} = \bar{N}_{t^-} - A^2_t - B^2_{t^-}, \label{e1}
\end{align}
\,\,where; $N$ and $\bar{N}$ are two cadlag. uniformly integrable martingales,\\
\hspace*{1.1 cm} $A^1$ and $A^2$ two nondecreasing right-continuous predictable processes, \\
\hspace*{1.1 cm} $B^1$ and $B^2$ two nondecreasing right-continuous predictable purely disconti- \\
\hspace*{1.1 cm} nuous processes.\\
Since otherwise, 
\begin{align*}
J^{g,p}_t = \mathbb{E}[A_T+ B_{T^-} \vert \mathcal{F}_{t^-}] -A_t - B_{t^-}; \\
\bar{J}^{g,p}_t = \mathbb{E}[A'_T +B'_{T^-} \vert \mathcal{F}_{t^-}]-A'_t- B'_{t^-},
\end{align*}
the uniqueness of Mertens decomposition implies that $A^1 \equiv A$, $B^1 \equiv B$, $A^2 \equiv A'$ and $B^2 \equiv B'$. \\
Now, by the orthogonal decomposition property of martingales in $\mathbf{M}^2$ (Lemma \ref{orthogonal decomposition}), there exist 
 $(L, M^1), \, (\bar{L}, M^2) \in \mathbf{H}^2 \times \mathbf{M}^{2,\perp} $ such that
\begin{align}
 N_t= \int_0^t L_s dW_s + M^1_t; \,\,\,\,
\bar{N}_t= \int_0^t \bar{L}_s dW_s + M^2_t, \label{e2}
\end{align}
Therefore, combining equation (\ref{e1}) with (\ref{e2}), we get
\begin{align}
J^{g,p}_t & = - \int_t^T L_s dW_s -( M^1_T - M^1_{t^-}) + A_T-A_t + B_{T^-} - B_{t^-} ; \label{eq J^p} \\
\bar{J}^{g,p}_t & = - \int_t^T \bar{L}_s dW_s -(M^2_T - M^2_{t^-}) + A'_T - A'_t +  B'_{T^-} - B'_{t^-}. \label{eq Jbar^p}
\end{align}
Next, we have $Y-\xi = \tilde{Y} - \tilde{\xi}^{g,p} = J^{g,p} - \bar{J}^{g,p} - \tilde{\xi}^{g,p}$. By the Skorokhod condition (\ref{sko2}) satisfied by $B$, we get 
\begin{equation}
\Delta B_\tau (J^{g,p}_\tau - (\bar{J}^{g,p}_\tau + \tilde{\xi}^{g,p}_\tau))=0 \,\,\,\, \text{a.s. for all } \tau \in \mathcal{T}_0^p.
\end{equation}
We also have $\lbrace Y_{t^-} > \xi_{t^-} \rbrace = \lbrace J^{g,p}_{t^-} > \bar{J}^{g,p}_{t^-} + \tilde{\xi}^{g,p}_{t^-} \rbrace$. Hence, the skorokhod condition (\ref{sko2}) satisfied by $A$ can be expressed in the form:
\begin{equation}
\int_0^T \textbf{1}_{ \lbrace J^{g,p}_{t^-} > \bar{J}^{g,p}_{t^-} + \tilde{\xi}^{g,p}_{t^-} \rbrace} dA_t =0 \,\,\,\, \text{a.s.}
\end{equation}
We conclude that $(J^{g,p}, L, M^1, A, B)$ is the solution of the predictable reflected BSDE associated with the driver $0$ and the barrier $(\bar{J}^{g,p} + \tilde{\xi}^{g,p})\textbf{1}_{[0,T)}$\footnote{Note that this barrier is equal to $E(A'_T-A'_t + C'_{T^-} - C'_{t^-} \vert \mathcal{F}_{t^-}) - \tilde{\xi}^{g,p}_t$ if $t<T$, and $0 $ if $t=T$. \label{refnote}}. \\
We prove similarly that  $(\bar{J}^{g,p}, \bar{L}, M^2, A', B')$ is the solution of the predictable reflected BSDE associated with the driver $0$ and the barrier $(J^{g,p} - \tilde{\zeta}^{g,p})\textbf{1}_{[0,T)}$ \footnote{Note that this barrier is equal to $E(A_T-A_t + C_{T^-} - C_{t^-} \vert \mathcal{F}_{t^-}) - \tilde{\zeta}^{g,p}_t$ if $t<T$, and $0 $ if $t=T$. \label{refnote}}.
This completes the proof.
\end{proof}

This lemma shows that the existence of the solution to the predictable DRBSDE with parameters $(g, \xi, \zeta)$, implies the existence of the solution to the coupled system of predictable RBSDEs  (\ref{coupled}). In the following proposition, we prove that this can be seen as an equivalence relation.
\begin{proposition} \label{equivalent proposition}Let $g \in \mathbf{H}^2$. Let $\xi$ and $\zeta$ be two reflected admissible obstacles in $\mathbf{S}^{2,p}$. The following assertions are equivalent:
\begin{enumerate}
\item[(i)] The predictable DRBSDE (\ref{eq*}) with driver process g(t) has a solution. 
\item[(ii)] There exist two predictable processes $J^p_. \in \mathbf{S}^{2,p}$ and $\bar{J}^p_. \in \mathbf{S}^{2,p}$ satisfying the coupled system of predictable DRBSDEs: 
\begin{equation} \label{coupled*}
J^p= \mathcal{P}re[(\bar{J}^p + \tilde{\xi}^{g,p} ) \textbf{1}_{[0,T)}]; \,\,\,\,\,\,\, \bar{J}^p= \mathcal{P}re[(J^p -\tilde{\zeta}^{g,p} ) \textbf{1}_{[0,T)}],
\end{equation}
where $\tilde{\xi}^{g,p}$ and $ \tilde{\zeta}^{g,p}$ as above. 
\end{enumerate}
In this matter, the predictable process $Y$ defined by 
\begin{equation} \label{Y}
Y_t := J^p_t - \bar{J}^p_t + E[ \xi_T + \int_t^T g_s ds  \vert \mathcal{F}_{t^-}], \,\, 0 \leq  t \leq T \,\, \text{a.s.}
\end{equation}
gives the first component of the solution to the predictable DRBSDE. 
\end{proposition}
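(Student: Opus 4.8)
The plan is to prove Proposition \ref{equivalent proposition} by establishing the two implications separately, using Lemma \ref{implication} for the direction $(i)\Rightarrow(ii)$ and a direct verification for the converse $(ii)\Rightarrow(i)$.

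For the implication $(i)\Rightarrow(ii)$, the work is essentially already done in Lemma \ref{implication}. If the predictable DRBSDE with driver $(g_t)$ admits a solution $(Y,Z,M,A,B,A',B')$, then the lemma produces the two processes $J^{g,p}$ and $\bar{J}^{g,p}$, shows they belong to $\mathbf{S}^{2,p}$ and are nonnegative predictable strong supermartingales, and verifies that they satisfy the coupled system \eqref{coupled}. Setting $J^p:=J^{g,p}$ and $\bar{J}^p:=\bar{J}^{g,p}$ gives the required pair in $(ii)$, and formula \eqref{Y} for $Y$ is exactly the representation obtained in the lemma.

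The substantive direction is $(ii)\Rightarrow(i)$. Suppose we are given a pair $(J^p,\bar{J}^p)$ in $(\mathbf{S}^{2,p})^2$ solving the coupled system \eqref{coupled*}. First I would unpack what it means for $J^p$ and $\bar{J}^p$ to be solutions of the two predictable RBSDEs with driver $0$: by the definition of the operator $\mathcal{P}re$ each comes equipped with its own data, namely $(J^p,L,M^1,A,B)$ solving the RBSDE with barrier $(\bar J^p+\tilde\xi^{g,p})\mathbf{1}_{[0,T)}$ and $(\bar J^p,\bar L,M^2,A',B')$ solving the RBSDE with barrier $(J^p-\tilde\zeta^{g,p})\mathbf{1}_{[0,T)}$, so that $J^p$ and $\bar{J}^p$ admit the Mertens-type representations \eqref{eq J^p}--\eqref{eq Jbar^p}. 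Then I would \emph{define} $Y$ by \eqref{Y}, set $Z:=L-\bar L$, $M:=M^1-M^2$ (noting $M\in\mathbf{M}^{2,\perp}$ since that space is a vector space), and take $A,A',B,B'$ from the two RBSDE solutions. Substituting the representations of $J^p$ and $\bar{J}^p$ into \eqref{Y} and using that $\mathbb{E}[\xi_T+\int_t^T g_s\,ds\mid\mathcal{F}_{t^-}]$ is (up to the predictable projection of a martingale) a term that telescopes correctly, I would check that the candidate $(Y,Z,M,A,B,A',B')$ satisfies equation \eqref{eq**}, hence \eqref{eq*}.

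It then remains to verify the constraints $(i)$--$(iv)$ of Definition \ref{def1}. The inequality $\xi\leq Y\leq\zeta$ follows from the barrier constraints $J^p\geq\bar J^p+\tilde\xi^{g,p}$ and $\bar J^p\geq J^p-\tilde\zeta^{g,p}$ inherent in the RBSDEs, after translating back through the definitions of $\tilde\xi^{g,p}$, $\tilde\zeta^{g,p}$ and $Y$. The Skorokhod conditions \eqref{sko1} and \eqref{sko2} for $A,B$ (resp. $A',B'$) are read off from the corresponding minimality conditions of the two predictable RBSDEs, using the identities $\{Y_{t^-}>\xi_{t^-}\}=\{J^p_{t^-}>\bar J^p_{t^-}+\tilde\xi^{g,p}_{t^-}\}$ and its analogue for $\zeta$, exactly as in the proof of Lemma \ref{implication} but run in reverse. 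The main obstacle, and the step requiring the most care, is the mutual-singularity condition $(iv)$: $dA_t\perp dA'_t$ and $dB_t\perp dB'_t$ do not come for free from the two separately-solved RBSDEs, since the supports of the pushing measures could in principle overlap on $\{\tilde\xi^{g,p}=-\tilde\zeta^{g,p}\}$ (equivalently $\{\xi=\zeta\}$). The argument here should exploit that $A$ pushes only when $Y=\xi$ and $A'$ only when $Y=\zeta$, so both can increase simultaneously only on $\{\xi=\zeta\}$; one then invokes the uniqueness/minimality built into the coupled system to argue that the singularity is automatically inherited, or alternatively constructs the optional set $D$ witnessing singularity directly from the contact sets, as in \cite{Doubly Reflected BSDEs and E-Dynkin games}. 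Once $(iv)$ is secured, $(Y,Z,M,A,B,A',B')$ is a bona fide solution of the predictable DRBSDE and the equivalence is complete.
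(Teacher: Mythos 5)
Your overall route is the same as the paper's: $(i)\Rightarrow(ii)$ is delegated to Lemma \ref{implication}, and for $(ii)\Rightarrow(i)$ you define $Y$ by \eqref{Y}, assemble equation \eqref{eq**} from the Mertens-type representations of $J^p$ and $\bar J^p$, and transfer the barrier constraint and the Skorokhod conditions through the identities between the contact sets $\lbrace Y=\xi\rbrace$, $\lbrace J^p=\bar J^p+\tilde\xi^{g,p}\rbrace$, etc. One small repair first: with $Z:=L-\bar L$ and $M:=M^1-M^2$ the equation does not close, because the term $E[\xi_T+\int_t^T g_s\,ds\mid\mathcal F_{t^-}]$ is itself the first component of a (non-reflected) predictable BSDE and contributes its own martingale parts; one must write it as $\xi_T+\int_t^T g_s\,ds-\int_t^T L'_s\,dW_s-(\bar M_{T^-}-\bar M_{t^-})$ and set $Z:=L-\bar L+L'$, $M:=M^1-M^2+\bar M$. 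Your parenthetical remark gestures at this, but your explicit definitions omit it.

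The genuine gap is in your treatment of condition (iv). The processes $A,B$ and $A',B'$ are uniquely determined by the Mertens decompositions of $J^p$ and $\bar J^p$, and there is no ``uniqueness/minimality built into the coupled system'' that forces $dA_t\perp dA'_t$: on $\lbrace\xi=\zeta\rbrace$ the two measures can genuinely charge the same sets, so the singularity is \emph{not} automatically inherited, and constructing a witnessing set $D$ from the contact sets cannot succeed either, for the same reason. The paper's resolution is not to prove singularity of the given processes but to \emph{replace} them: by the canonical decomposition of a predictable RCLL process with integrable total variation (Proposition A.7 of \cite{Doubly Reflected BSDEs and E-Dynkin games}) one writes $A-A'=C-C'$ with $dC_t\perp dC'_t$, $dC_t\ll dA_t$, $dC'_t\ll dA'_t$ (and similarly $B-B'=D-D'$), uses these absolute-continuity relations to transfer the Skorokhod conditions \eqref{sko1} and \eqref{sko2} from $(A,A',B,B')$ to $(C,C',D,D')$, and concludes that $(Y,Z,M,C,D,C',D')$ --- not the original septuple --- is the solution. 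Neither of the two alternatives you sketch supplies this substitution step, so as written your verification of Definition \ref{def1}(iv) would fail.
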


\begin{remark}
Note that the process $(J^p_t - \bar{J}^p_t + E[ \xi_T + \int_t^T g_s ds  \vert \mathcal{F}_{t^-}])_{t\leq T}$ is ladlag. Indeed, by definition of the operator $\mathcal{P}re[.]$ and Proposition \ref{existence PBSDE} it follows that the processes $J^p$ and $\bar{J}^p$ are predictable strong supermartingales. Accordingly, both are ladlag (cf. \cite{Probabilites et Potentiel2} Theorem 4, Appendix 1, p. 406), and the remark follows.
\end{remark}
\begin{proof}
The implication $(i) \Rightarrow (ii)$, has been proved in Lemma \ref{implication}. Let us prove $(ii) \Rightarrow (i)$. The steps of the proof are similar of those used in the literature (see eg., \cite{Generalized Dynkin Games and Doubly reflected BSDEs with jumps}, \cite{Doubly Reflected BSDEs and E-Dynkin games}). Let $(J^p, L, M^1,A,B)$ and $(\bar{J}^p, \bar{L}, M^2,A',B') $  be two solutions in $ \mathbf{S}^{2,p} \times \mathbf{H}^2 \times \mathbf{M}^{2,\perp} \times (\mathbf{S}^{2,p})^2 \times (\mathbf{S}^{2,p})^2$ of the coupled system (\ref{coupled*}) . We define the process $Y$ as in (\ref{Y}). \\
\hspace*{0.5 cm} By assumptions, the processes $J^p$ and $\bar{J}^p$ belong to $\mathbf{S}^{2,p}$. Hence, the difference $J^p - \bar{J}^p \in \mathbf{S}^{2,p}$ and thus the process $Y$ is well defined. Further, since $J^p_T=\bar{J}^p_T=0$ a.s., we get $Y_T=\xi_T$ a.s. By the formulation of the coupled system (\ref{coupled*}), we get $J^p_\tau \geq \bar{J}^p_\tau + \tilde{\xi}^{g,p}_\tau$ and $\bar{J}^p_\tau \geq J^p_\tau - \tilde{\zeta}^{g,p}_\tau$ a.s. for all $\tau \in \mathcal{T}_0^p$. Then, we derive that $ \xi_\tau \leq Y_\tau \leq \zeta_\tau$ a.s. for all $\tau \in \mathcal{T}_0^p$. \\
 \hspace*{0.5 cm} We note also that equations (\ref{eq J^p}) and (\ref{eq Jbar^p}), with $J^p$ and $\bar{J}^p$ in place of $J^{g,p}$ and $\bar{J}^{g,p}$, still hold. Otherwise, we can see that the process $(E(\xi_T + \int_t^T g_s \vert \mathcal{F}_{t^-}))_{t\in [0,T]}$ coincides with the first component of the solution to the (non-reflected) predictable BSDE with terminal condition $\xi_T$ and driver $g$ (cf. \cite{Bouhadou} p. 2). Thus, there exist $(L', \bar{M}) \in \mathbf{H}^2 \times \mathbf{M}^{2,\perp}$ such that:
 \begin{equation}
 E(\xi_T + \int_t^T g_s ds \vert \mathcal{F}_{t^-}) = \xi_T + \int_t^t g_s ds - \int_t^T L'_s dW_s - (\bar{M}_{T^-} - \bar{M}_{t^-}).
\end{equation}  
From this, together with (\ref{Y}), and equations (\ref{eq J^p}) and (\ref{eq Jbar^p}) for $J^p$ and $\bar{J}^p$, we get  
\begin{multline}
Y_t= \xi_T + \int_t^T g_s ds - \int_t^T Z_s dW_s -( M_{T^-}- M_{t^-}) + A_T- A_t - (A'_T- A'_t) \\ + B_{T^-} - B_{t^-}  - (B'_{T^-} - B'_{t^-}),
\end{multline}
where $Z:= L-\bar{L} + L'$, $M=M^1-M^2 + \bar{M}$. \\
We now prove that $A$, $A'$, $B$ and $B'$ satisfying the Skorokhod conditions (\ref{sko1}) and (\ref{sko2}). From the above, it follows that the processes $A$, $B$ (resp. $A'$, $B'$) satisfy the Skorokhod conditions for the predictable RBSDEs. Accordingly, we have: for all $\tau \in \mathcal{T}^p_0$, $\Delta A_\tau = \textbf{1}_{\lbrace J^p_{\tau^-} = \bar{J}^p_{\tau^-} + \tilde{\xi}^{g,p}_{\tau^-}\rbrace} \Delta A_\tau=$ a.s., $\Delta B_{\tau} = \textbf{1}_{\lbrace J^p_\tau = \bar{J}^p_\tau + \tilde{\xi}^{g,p}_\tau \rbrace} \Delta B_{\tau}$ a.s. and $\int_0^T \textbf{1}_{\lbrace J^p_{t} > \bar{J}^p_{t} + \tilde{\xi}^{g,p}_{t} \rbrace} dA^c_t=0$ a.s. (see \cite{Bouhadou} Lemma 7, p. 12). Similar conditions hold for $A'$ and $B'$. By the definition of $Y$ and $\tilde{\zeta}^{g,p}$, we get 
\begin{align*}
\lbrace J^p_{t} > \bar{J}^p_{t} + \tilde{\xi}^{g,p}_{t} \rbrace &= \lbrace Y_{t} > \xi_{t} \rbrace \\
 \lbrace J^p_\tau = \bar{J}^p_\tau + \tilde{\xi}^{g,p}_\tau \rbrace &= \lbrace Y_\tau=\xi_\tau \rbrace \\ 
  \lbrace J^p_{\tau^-} = \bar{J}^p_{\tau^-} + \tilde{\xi}^{g,p}_{\tau^-}\rbrace &= \lbrace Y_{\tau^-} = \xi_{\tau^-} \rbrace. 
\end{align*}
This, together with the previous observation implies $\int_0^T \textbf{1}_{\lbrace Y_{t^-} > \xi_{t^-} \rbrace} dA_t =0$ a.s. and $\Delta B_{\tau} = \textbf{1}_{\lbrace Y_\tau=\xi_\tau \rbrace}  \Delta B_{\tau}$ a.s. for all $\tau \in \mathcal{T}^p_0$. By the same manner, we get $\int_0^T \textbf{1}_{\lbrace Y_{t^-} > \xi_{t^-} \rbrace} dA_t =0$ a.s. and $\Delta B_{\tau} = \textbf{1}_{\lbrace Y_\tau=\xi_\tau \rbrace}  \Delta B_{\tau}$ a.s. for all $\tau \in \mathcal{T}^p_0$. The only point remaining concerns the behavior of the random measures $dA_t$, $dA'_t$ and $dB_t$, $dB'_t$.\\

 If $dA_t \perp dA'_t$ and $dB_t \perp  dB'_t$, then the vector $(Y,Z,M,A,B,A',B')$ is a solution to the predictable doubly RBSDE with parameters $(f,\xi,\zeta)$, which is the desired conclusion. If not, by the canonical decomposition of a rcll predictable process with integrable total variation (see Proposition A.7 in \cite{Generalized Dynkin Games and Doubly reflected BSDEs with jumps}, p. 30), there exist a pair $(C,C')$ (resp. $(D,D')$) of nondecreasing right-continuous predictable processes belonging to $\mathbf{S}^{2,p}$, such that $A-A'= C-C'$ (resp. $B-B'= D-D'$) with $dC_t \perp dC'_t$ (resp. $dD_t \perp dD'_t$). Moreover, $dC_t \ll dA_t$, $dC'_t \ll dA'_t$, $dD_t \ll dB_t$ and $dD'_t \ll dB'_t$. \\
 
 Since otherwise $\int_0^T \textbf{1}_{\lbrace Y_{t^-} > \xi_{t^-} \rbrace} dA_t =0$ a.s., we get $\int_0^T \textbf{1}_{\lbrace Y_{t^-} > \xi_{t^-} \rbrace} dC_t =0$ a.s. Similarly, we obtain $\int_0^T \textbf{1}_{\lbrace Y_{t^-} < \zeta_{t^-} \rbrace} dC'_t =0$ a.s. The processes $C$ and $C'$ thus satisfy the Skorokhod conditions (\ref{sko1}). Therefore, the observation $dD_t \ll dB_t$ implies that $D$ is purely discontinuous and $\Delta D_\tau= \textbf{1}_{\lbrace Y_\tau= \xi_\tau \rbrace} \Delta D_\tau$ a.s. for all $\tau \in \mathcal{T}^p_0$. Similarly, $D'$ is purely discontinuous and $\Delta D'_\tau= \textbf{1}_{\lbrace Y_\tau= \zeta_\tau \rbrace} \Delta D'_\tau$ a.s. for all $\tau \in \mathcal{T}^p_0$. Hence, the processes $D$ and $D'$ thus satisfy the Skorokhod conditions (\ref{sko2}).  We conclude that the process $(Y,Z,M,C,D,C',D')$ is a solution to the predictable doubly RBSDE with parameters $(f,\xi,\zeta)$. Which completes the proof.\\
\end{proof}

\hspace*{0.5 cm} We have thus proved that the existence of a solution to the coupled system (\ref{coupled*}) is a sufficient condition for the existence of a solution to the predictable DRBSDE associated with driver process $(g_t)$. In the following, by constructing a Picard-type iterative procedure, we prove that under Mokobodzki's condition, the coupled system has a solution. \\

Set $\mathbf{J}^{p,0}_.=0$ and $\bar{\mathbf{J}}^{p,0}_.=0$ . We define recursively, for each $n\in \mathbb{N}$, the processes:\footnote{We omit the exponent $g$ in the notation for $\mathbf{J}^{p,n}$ and $\bar{\mathbf{J}}^{p,n}$ for sake of simplicity. \label{refnote}}
\begin{equation} \label{coupled**}
\mathbf{J}^{p,n+1} := \mathcal{P}re[(\bar{\mathbf{J}}^{p,n} + \tilde{\xi}^{g,p} ) \textbf{1}_{[0,T)}]; \,\,\,\,\,\,\, \bar{\mathbf{J}}^{p,n+1} := \mathcal{P}re[(\mathbf{J}^{p,n}  -\tilde{\zeta}^{g,p} ) \textbf{1}_{[0,T)}]
\end{equation}
Since $\tilde{\xi}^{g,p}, \tilde{\zeta}^{g,p} \in  \mathbf{S}^{2,p}$. By induction, one can see that the processes $\mathbf{J}^{p,n}$ and $\bar{\mathbf{J}}^{p,n}$ are well defined.

\begin{lemma}\label{lim}
Let $(\xi,\zeta) \in \mathbf{S}^{2,p} \times \mathbf{S}^{2,p}$ be a pair of predictable admissible barriers satisfies Mokobodzki's condition. The sequences of processes $(\mathbf{J}^{p,n}_.)_{n\in\mathbb{N}}$ and $(\bar{\mathbf{J}}^{p,n}_.)_{n\in\mathbb{N}}$ are nondecreasing. The processes $\mathbf{J}^p_.$ and $\bar{\mathbf{J}}^p_.$ defined by 
\begin{equation} \label{lim}
\mathbf{J}^p_. := \lim_{n \rightarrow +\infty} \mathbf{J}^{p,n}_. \,\,\, \text{and} \,\,\, \bar{\mathbf{J}}^p_. := \lim_{n \rightarrow +\infty} \bar{\mathbf{J}}^{p,n}_.
\end{equation}
 are nonnegative strong supermartingales in $\mathbf{S}^{2,p}$ and satisfying the system (\ref{coupled*}) of coupled predictable RBSDEs.
\end{lemma}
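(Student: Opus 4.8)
The plan is to prove the four assertions in order: monotonicity of the two sequences, existence of the limits in $\mathbf{S}^{2,p}$, their supermartingale property, and finally that they solve the coupled system \eqref{coupled*}. Throughout I would use three properties of the operator $\mathcal{P}re$ attached to the predictable RBSDE with null driver: it is \emph{monotone} (comparison of obstacles), it produces the \emph{smallest} predictable strong supermartingale dominating its terminal--killed obstacle, and it admits the predictable Snell--envelope representation $\mathcal{P}re[\phi]_S = \esssup_{\tau \in \mathcal{T}^p_S} \mathbb{E}[\phi_\tau \mid \mathcal{F}_{S^-}]$. Monotonicity of the sequences I would obtain by a coupled induction. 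For the base step, since $\mathbf{J}^{p,0}=\bar{\mathbf{J}}^{p,0}=0$ and the obstacle vanishes at $T$, the representation gives $\mathbf{J}^{p,1}=\mathcal{P}re[\tilde{\xi}^{g,p}\textbf{1}_{[0,T)}]\geq \mathbb{E}[0\mid\mathcal{F}_{\cdot^-}]=0$ and likewise $\bar{\mathbf{J}}^{p,1}\geq 0$. For the inductive step, assuming $\mathbf{J}^{p,n}\geq \mathbf{J}^{p,n-1}$ and $\bar{\mathbf{J}}^{p,n}\geq \bar{\mathbf{J}}^{p,n-1}$, applying the monotonicity of $\mathcal{P}re$ to $\bar{\mathbf{J}}^{p,n}+\tilde{\xi}^{g,p}\geq \bar{\mathbf{J}}^{p,n-1}+\tilde{\xi}^{g,p}$ yields $\mathbf{J}^{p,n+1}\geq \mathbf{J}^{p,n}$, and symmetrically for the barred sequence.

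The crux is the uniform bound, and this is where Mokobodzki's condition enters. Starting from the nonnegative supermartingales $H^p,\overline{H}^p$ with $\xi\leq H^p-\overline{H}^p\leq \zeta$, I would manufacture two nonnegative predictable strong supermartingales $U,\overline{U}\in\mathbf{S}^{2,p}$ satisfying the \emph{transformed} Mokobodzki inequality $\tilde{\xi}^{g,p}\leq U-\overline{U}\leq\tilde{\zeta}^{g,p}$. The naive choice $U=H^p-\mathbb{E}[\xi_T+\int_\cdot^T g_s\,ds\mid\mathcal{F}_{\cdot^-}]$ fails, because subtracting the driver term destroys the supermartingale property. I would repair this by splitting the conditional expectation into $\pm$ parts: writing $P_t=\mathbb{E}[\int_t^T g_s^+\,ds\mid\mathcal{F}_{t^-}]$, $Q_t=\mathbb{E}[\int_t^T g_s^-\,ds\mid\mathcal{F}_{t^-}]$ (both nonnegative supermartingales, as a martingale minus a nondecreasing process) and $\mu_t=\mathbb{E}[\xi_T^+\mid\mathcal{F}_{t^-}]$ (a nonnegative martingale), I would set
$$U := H^p + Q + \mathbb{E}[\xi_T^-\mid\mathcal{F}_{\cdot^-}], \qquad \overline{U} := \overline{H}^p + P + \mu.$$
Then $U,\overline{U}\geq 0$, both are supermartingales (sums of supermartingales and martingales), both lie in $\mathbf{S}^{2,p}$ since $g\in\mathbf{H}^2$ and $\xi_T\in L^2$, and a direct computation gives $U-\overline{U}=(H^p-\overline{H}^p)-\mathbb{E}[\xi_T+\int_\cdot^T g_s\,ds\mid\mathcal{F}_{\cdot^-}]$, so the transformed inequality follows from $\xi\leq H^p-\overline{H}^p\leq \zeta$. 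A coupled induction then gives $0\leq\mathbf{J}^{p,n}\leq U$ and $0\leq\bar{\mathbf{J}}^{p,n}\leq \overline{U}$: the lower bounds come from the previous paragraph, while for the upper bounds the hypothesis $\bar{\mathbf{J}}^{p,n}\leq \overline{U}$ gives $(\bar{\mathbf{J}}^{p,n}+\tilde{\xi}^{g,p})\textbf{1}_{[0,T)}\leq U$ (on $[0,T)$ by the band inequality, at $T$ because $U\geq 0$), whence the minimality of $\mathcal{P}re$ forces $\mathbf{J}^{p,n+1}\leq U$, and symmetrically $\bar{\mathbf{J}}^{p,n+1}\leq \overline{U}$.

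With the bound in hand, the limits $\mathbf{J}^p,\bar{\mathbf{J}}^p$ exist as nondecreasing pointwise limits and, being trapped between $0$ and $U$ (resp. $\overline{U}$), satisfy $\mathbb{E}[\esssup_{\tau\in\mathcal{T}^p_0}|\mathbf{J}^p_\tau|^2]\leq \vertiii{U}^2_{\mathbf{S}^{2,p}}<\infty$, so they are nonnegative predictable processes in $\mathbf{S}^{2,p}$. To see they are strong supermartingales I would pass to the limit in the inequality $\mathbf{J}^{p,n}_S\geq \mathbb{E}[\mathbf{J}^{p,n}_\tau\mid\mathcal{F}_{S^-}]$ (for $S\leq\tau$ in $\mathcal{T}^p_0$) using the conditional monotone convergence theorem, which applies since the sequence is nondecreasing and nonnegative; this yields $\mathbf{J}^p_S\geq \mathbb{E}[\mathbf{J}^p_\tau\mid\mathcal{F}_{S^-}]$, and likewise for $\bar{\mathbf{J}}^p$.

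It remains to identify the limits as solutions of \eqref{coupled*}. Since $\bar{\mathbf{J}}^{p,n}\uparrow\bar{\mathbf{J}}^p$, the obstacles $(\bar{\mathbf{J}}^{p,n}+\tilde{\xi}^{g,p})\textbf{1}_{[0,T)}$ increase to $(\bar{\mathbf{J}}^p+\tilde{\xi}^{g,p})\textbf{1}_{[0,T)}$, with a common integrable lower bound $\tilde{\xi}^{g,p}\in\mathbf{S}^{2,p}$. The Snell--envelope representation together with the monotone convergence theorem then legitimizes interchanging $\lim_n$ and $\esssup_\tau$ for this increasing family, giving $\mathcal{P}re[(\bar{\mathbf{J}}^{p,n}+\tilde{\xi}^{g,p})\textbf{1}_{[0,T)}]\uparrow \mathcal{P}re[(\bar{\mathbf{J}}^p+\tilde{\xi}^{g,p})\textbf{1}_{[0,T)}]$; as the left side equals $\mathbf{J}^{p,n+1}\uparrow\mathbf{J}^p$, I conclude $\mathbf{J}^p=\mathcal{P}re[(\bar{\mathbf{J}}^p+\tilde{\xi}^{g,p})\textbf{1}_{[0,T)}]$, and symmetrically $\bar{\mathbf{J}}^p=\mathcal{P}re[(\mathbf{J}^p-\tilde{\zeta}^{g,p})\textbf{1}_{[0,T)}]$, which is precisely \eqref{coupled*}. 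I expect the main obstacle to be the second step: constructing the bounding supermartingales $U,\overline{U}$ is the only place Mokobodzki's condition is used, and it is there that the driver term and the possible left jumps of the data must be accommodated so as to preserve simultaneously nonnegativity and the supermartingale property.
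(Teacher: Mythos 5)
Your proposal is correct and follows essentially the same route as the paper: the coupled monotonicity induction, the bounding pair $U,\overline{U}$ (which coincide exactly with the paper's $H^{g,p},\bar H^{g,p}$ built from Mokobodzki's supermartingales plus the split conditional expectations of $\xi_T^{\pm}$ and $\int g^{\pm}$), and the passage to the limit are all as in the paper. The only cosmetic difference is the last step: you identify the limit by continuity of the Snell envelope under increasing obstacles, whereas the paper proves the two inequalities separately using $\mathcal{P}re[\phi]\geq\phi$ and $\mathcal{P}re[\mathbf{J}^p]=\mathbf{J}^p$; both arguments rest on the same representation and are valid.
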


\begin{proof}
See Appendix.
\end{proof}

\begin{proposition}
The processes $\mathbf{J}^p_.$ and $\bar{\mathbf{J}}^p_.$ are minimal in the following sense: if $H$ and $\bar{H}$ are two nonnegative predictable strong supermartingales such that $\tilde{\xi}^{g,p} \leq H-\bar{H} \leq \tilde{\zeta}^{g,p}$, then we have $\mathbf{J}^p \leq H$ and $\bar{\mathbf{J}}^p \leq \bar{H}$. 
\end{proposition}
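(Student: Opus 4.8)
The plan is to argue by induction on the index $n$ of the Picard scheme (\ref{coupled**}), exploiting the minimality (Snell-envelope) characterization of the operator $\mathcal{P}re$: for an admissible obstacle $\eta$, the process $\mathcal{P}re[\eta]$ is the \emph{smallest} nonnegative predictable strong supermartingale dominating $\eta$ on $[0,T)$ (with null terminal value). This characterization, inherited from the construction of predictable RBSDEs with driver $0$ in \cite{Bouhadou}, is the essential tool: whenever a nonnegative predictable strong supermartingale $U$ satisfies $U_t \geq \eta_t$ for $t\in[0,T)$, one automatically has $\mathcal{P}re[\eta]\leq U$.

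Let $H$ and $\bar{H}$ be two nonnegative predictable strong supermartingales with $\tilde{\xi}^{g,p}\leq H-\bar{H}\leq \tilde{\zeta}^{g,p}$. I would first claim that $\mathbf{J}^{p,n}\leq H$ and $\bar{\mathbf{J}}^{p,n}\leq \bar{H}$ for every $n\in\mathbb{N}$. The base case $n=0$ is immediate, since $\mathbf{J}^{p,0}=\bar{\mathbf{J}}^{p,0}=0$ and $H,\bar{H}\geq 0$. For the inductive step, assume the inequalities hold at rank $n$. Combining the induction hypothesis $\bar{\mathbf{J}}^{p,n}\leq \bar{H}$ with $\tilde{\xi}^{g,p}\leq H-\bar{H}$ gives, on $[0,T)$, the bound $\bar{\mathbf{J}}^{p,n}+\tilde{\xi}^{g,p}\leq \bar{H}+(H-\bar{H})=H$, so that $H$ is a nonnegative predictable strong supermartingale dominating the obstacle $(\bar{\mathbf{J}}^{p,n}+\tilde{\xi}^{g,p})\textbf{1}_{[0,T)}$ (at $T$ this obstacle vanishes while $H_T\geq 0$). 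By the minimality of $\mathcal{P}re$ this forces $\mathbf{J}^{p,n+1}=\mathcal{P}re[(\bar{\mathbf{J}}^{p,n}+\tilde{\xi}^{g,p})\textbf{1}_{[0,T)}]\leq H$. Symmetrically, from $\mathbf{J}^{p,n}\leq H$ and $H-\bar{H}\leq\tilde{\zeta}^{g,p}$ one obtains on $[0,T)$ the bound $\mathbf{J}^{p,n}-\tilde{\zeta}^{g,p}\leq H-(H-\bar{H})=\bar{H}$, whence $\bar{\mathbf{J}}^{p,n+1}=\mathcal{P}re[(\mathbf{J}^{p,n}-\tilde{\zeta}^{g,p})\textbf{1}_{[0,T)}]\leq \bar{H}$ by the same minimality property. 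This closes the induction.

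Finally, letting $n\to+\infty$ and invoking Lemma \ref{lim}, which guarantees that the sequences are nondecreasing with $\mathbf{J}^{p,n}\uparrow \mathbf{J}^p$ and $\bar{\mathbf{J}}^{p,n}\uparrow \bar{\mathbf{J}}^p$, the inequalities pass to the limit and yield $\mathbf{J}^p\leq H$ and $\bar{\mathbf{J}}^p\leq \bar{H}$, which is the asserted minimality.

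The only delicate point I anticipate is the precise statement and justification of the minimality property of $\mathcal{P}re$ in the present predictable framework. In particular, one must ensure that the comparison with a merely strong (non-càdlàg) supermartingale $H$ holds along predictable stopping times, and not just pointwise in $t$; this is where I would appeal carefully to the Snell-envelope description of the solution of the predictable RBSDE with null driver established in \cite{Bouhadou}, and, if needed, use the section theorem (Theorem \ref{section}) to upgrade the obtained inequalities between predictable processes to indistinguishability. Once this characterization is in hand, the induction above is entirely routine.
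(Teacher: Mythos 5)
Your proof is correct and follows essentially the same route as the paper: an induction on the Picard index $n$, deriving $\bar{\mathbf{J}}^{p,n}+\tilde{\xi}^{g,p}\leq H$ and $\mathbf{J}^{p,n}-\tilde{\zeta}^{g,p}\leq \bar{H}$ from the hypothesis, then concluding via the Snell-envelope property of $\mathcal{P}re$ (the paper phrases this as monotonicity of $\mathcal{P}re$ combined with $\mathcal{P}re[H]=H$ for a predictable strong supermartingale, i.e.\ Lemma \ref{incr opera} and Remark \ref{pre=}, which is exactly the minimality you invoke), and passing to the limit using the monotone convergence of Lemma \ref{lim}.
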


\begin{proof} Let $H$ and $\bar{H}$ be two nonnegative predictable strong supermartingales such that $\tilde{\xi}^{g,p} \leq H-\bar{H} \leq \tilde{\zeta}^{g,p}$.
We begin by proving recursively that for each $n \in \mathbb{N}$, 
\begin{equation} \label{<<}
\mathbf{J}^{p,n} \leq H  \,\,\,\, \text{and} \,\,\,\,  \bar{\mathbf{J}}^{p,n} \leq \bar{H}.
\end{equation} 
First, we have $\mathbf{J}^{p,0}=0 \leq H$ and $\bar{\mathbf{J}}^{p,0}=0 \leq \bar{H}$. Suppose now that, for some fixed
$n \in \mathbb{N}$, equation (\ref{<<}) holds at rank $n$. By the hypotheses $\bar{H} + \tilde{\xi}^{g,p} \leq H$, we derive that 
$$\bar{\mathbf{J}}^{p,n} + \tilde{\xi}^{g,p} \leq \bar{H} + \tilde{\xi}^{g,p} \leq H.$$
As the operator $\mathcal{P}re$ is nondecreasing, we get  $$\mathbf{J}^{p,n+1}= \mathcal{P}re[\bar{\mathbf{J}}^{p,n} + \tilde{\xi}^{g,p}] \leq \mathcal{P}re[H],$$
since $H$ is a predictable strong supermartingale, by Remark \ref{pre=} in Appendix, we have $\mathcal{P}re[H]= H$. Further, $\mathbf{J}^{p,n+1} \leq H$. Similarly, we get $\bar{\mathbf{J}}^{p,n+1} \leq \bar{H}$, which is the desired conclusion. The proof is completed by letting $n$ tend to $+\infty$ in (\ref{<<}). 
\end{proof}
\vspace*{0.1 cm}

By the previous Lemma and Proposition \ref{equivalent proposition}, we derive the following existence result.

\begin{theorem} \label{existence}
Let $g=(g_t)\in \mathbf{H}^{2,p}$ be a driver process. Let $(\xi,\zeta)$ be a pair of predictable admissible barriers satisfying Mokobodzki's condition. Then, there exists a solution of the predictable doubly RBSDE associated with the driver $g$. The first component of this solution is given by 
\begin{equation}
Y_t := \mathbf{J}^p_t - \bar{\mathbf{J}}^p_t + E[ \xi_T + \int_t^T g_s ds  \vert \mathcal{F}_{t^-}], \,\,\,\, \text{a.s.}
\end{equation}
where $ \mathbf{J}^p$ and $\bar{\mathbf{J}}^p$ are the processes defined in (\ref{lim}).
\end{theorem}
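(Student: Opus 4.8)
The plan is to assemble Theorem \ref{existence} directly from the two results that immediately precede it, namely Lemma \ref{lim} and Proposition \ref{equivalent proposition}, so the proof is essentially a short synthesis rather than a fresh argument. First I would invoke Lemma \ref{lim}: under Mokobodzki's condition the iteratively defined sequences $(\mathbf{J}^{p,n})_n$ and $(\bar{\mathbf{J}}^{p,n})_n$ are nondecreasing, and their limits $\mathbf{J}^p$ and $\bar{\mathbf{J}}^p$ (as in (\ref{lim})) are nonnegative predictable strong supermartingales in $\mathbf{S}^{2,p}$ that satisfy the coupled system (\ref{coupled*}). This gives exactly the pair of processes required in assertion $(ii)$ of Proposition \ref{equivalent proposition}.

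Next I would apply the implication $(ii)\Rightarrow(i)$ of Proposition \ref{equivalent proposition}, taking $J^p=\mathbf{J}^p$ and $\bar{J}^p=\bar{\mathbf{J}}^p$. That proposition guarantees that the predictable DRBSDE (\ref{eq*}) associated with the driver process $(g_t)$ admits a solution $(Y,Z,M,A,B,A',B')$, and it furnishes an explicit formula for the first component, namely
\begin{equation*}
Y_t = J^p_t - \bar{J}^p_t + \mathbb{E}\!\left[\xi_T + \int_t^T g_s\,ds \,\Big\vert\, \mathcal{F}_{t^-}\right] \quad \text{a.s.}
\end{equation*}
Substituting the specific processes $\mathbf{J}^p$ and $\bar{\mathbf{J}}^p$ then yields precisely the stated expression for $Y_t$, completing the argument.

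Since both ingredients are already established earlier in the excerpt, I do not anticipate a genuine obstacle here; the only points requiring care are bookkeeping ones. I would check that the limits produced by Lemma \ref{lim} indeed live in $\mathbf{S}^{2,p}$ and are nonnegative (so that they qualify as admissible inputs to the operator $\mathcal{P}re$ and to Proposition \ref{equivalent proposition}), and that the hypothesis $g=(g_t)\in\mathbf{H}^2$ used in Proposition \ref{equivalent proposition} matches the driver-process assumption of the theorem. One should also verify that the auxiliary processes $\tilde{\xi}^{g,p}$ and $\tilde{\zeta}^{g,p}$ appearing in (\ref{coupled*}) are the same ones entering the iteration (\ref{coupled**}); this is immediate from their definitions. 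With these verifications in place, chaining Lemma \ref{lim} into Proposition \ref{equivalent proposition} delivers the theorem with no further computation.
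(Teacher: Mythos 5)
Your proposal is correct and matches the paper's own argument exactly: the paper derives Theorem \ref{existence} precisely by combining Lemma \ref{lim} (which provides the pair $(\mathbf{J}^p,\bar{\mathbf{J}}^p)$ solving the coupled system (\ref{coupled*})) with the implication $(ii)\Rightarrow(i)$ of Proposition \ref{equivalent proposition}. Your additional bookkeeping checks are sensible but the synthesis is the same.
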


\subsection{Uniqueness of the solution of the predictable DRBSDE with driver process $(g_t)$}

\hspace*{0.5 cm} The proof of the uniqueness of the predictable doubly RBSDE solution, associated with the driver process $(g_t)\in \mathbf{H}^{2,p}$, is based on the following useful results. Let $\beta >0$. We first state some notations: 
\begin{enumerate}
\item[•] For $\phi \in \mathbf{H}^{2,p}$, we define $\Vert \phi \Vert_{\beta}^2 := E[\int_0^T e^{\beta s} \xi_s^2 ds]$.
\item[•] For $\xi \in \mathbf{S}^{2,p}$, we define $ \vertiii{\xi}^2_{\beta} := E[ \esssup_{\tau \in \mathcal{T}^p_0} e^{\tau \beta} \vert\xi_{\tau} \vert^2 ]$.
\item[•] For $M \in \mathbf{M}^2$, $\Vert M \Vert^2_{\mathbf{M}^2_\beta} := E(\int_0^T e^{s\beta} d[M]_s) $.
\end{enumerate}
Note that $\Vert \phi \Vert_{\beta}^2$ (resp. $\vertiii{\xi}^2_{\beta}$, $\Vert M \Vert^2_{\beta, \mathbf{M}^2}$) is a norm on $\mathbf{H}^{2,p}$ (resp. $\mathbf{S}^{2,p}$, $\mathbf{M}^2$) equivalent to the norm  $\Vert \xi \Vert^2_{\mathbf{H}^2}$ (resp. $\vertiii{\xi}^2_{\mathbf{S}^{2,p}}$,  $\Vert M \Vert_{\mathbf{M}^2}^2$).\\

We now give a priori estimate on the norm of the solution, the following lemma will be used in the sequel.  

\begin{lemma}(A priori estimate) \label{lemma uniqueness} Let $(Y,Z,M,A,B,A',B') \in \mathbf{S}^{2,p} \times \mathbf{H}^2 \times \mathbf{M}^{2,\perp} \times (\mathbf{S}^{2,p})^2 \times (\mathbf{S}^{2,p})^2$ (resp. $(\bar{Y}, \bar{Z}, \bar{M}, \bar{A}, \bar{B}, \bar{A}',\bar{B}') \in \mathbf{S}^{2,p} \times \mathbf{H}^2 \times \mathbf{M}^{2,\perp} \times (\mathbf{S}^{2,p})^2 \times (\mathbf{S}^{2,p})^2$) be a solution to the predictable DRBSDE associated with driver $g=(g_t) \in \mathbf{H}^{2,p}$ (resp. $\bar{g}=(\bar{g}_t) \in \mathbf{H}^{2,p}$) and the admissible barriers $\xi$ and $\zeta$. Then, there exists $c>0$ such that for all $\epsilon \geq 0$, for all $\beta >\frac{1}{\epsilon^2}$, we have 
\begin{equation}\label{l1}
\Vert Z- \bar{Z} \Vert^2_{\beta} + \Vert M- \bar{M} \Vert^2_{\mathbf{M}^2_\beta} \leq \epsilon^2 \Vert g- \bar{g} \Vert^2_{\beta}; \\
\end{equation}
and 
\begin{equation} \label{l11}
\vertiii{Y- \bar{Y}}^2_{\beta} \leq 2\epsilon^2(1 + 8c^2) \Vert g- \bar{g} \Vert^2_{\beta}.
\end{equation}
\end{lemma}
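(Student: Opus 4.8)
The plan is to prove the two a priori estimates by applying the Gal'chouk--Lenglart change of variables formula (Corollary \ref{corollary}) to the difference process $\tilde{Y} := Y - \bar{Y}$, tracking the resulting decomposition carefully and then exploiting the Skorokhod-type minimality conditions to discard the boundary terms involving the reflecting processes. Write $\tilde{Y} = Y - \bar{Y}$, $\tilde{Z} = Z - \bar{Z}$, $\tilde{M} = M - \bar{M}$, $\tilde{A} = A - \bar{A}$, and likewise for the primed and $B$-processes. Subtracting the two instances of equation (\ref{eq**}), the process $\tilde{Y}$ is an optional (indeed ladlag) semimartingale whose finite-variation and martingale pieces are differences of the corresponding pieces of the two solutions, and whose terminal value is $\tilde{Y}_T = \xi_T - \xi_T = 0$.

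\medskip

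First I would apply Corollary \ref{corollary} to $\tilde{Y}$ on $[\tau, T]$ for an arbitrary $\tau \in \mathcal{T}_0^p$, or equivalently write the formula on $[0,T]$ and localize. This produces: the quadratic-variation term $\int e^{\beta s}\, d\langle \tilde{M}^c, \tilde{M}^c\rangle_s$ (which, after the orthogonal decomposition $\tilde{M} = \int \tilde{Z}\,dW + \tilde{N}$ of Lemma \ref{orthogonal decomposition}, splits into $\int e^{\beta s}|\tilde{Z}_s|^2\,ds$ plus the continuous part of $\langle \tilde{N}\rangle$); the jump sums $\sum e^{\beta s}(\Delta \tilde{Y}_s)^2$ and $\sum e^{\beta s}(\Delta_+ \tilde{Y}_s)^2$, which are nonnegative and combine with the martingale's jumps to reconstitute $\|M - \bar M\|^2_{\mathbf{M}^2_\beta}$; the drift term $\int \beta e^{\beta s}\tilde{Y}_s^2\,ds$; the driver term, which after subtracting produces $2\int e^{\beta s}\tilde{Y}_{s^-}(g_s - \bar{g}_s)\,ds$; and crucially the reflection terms $2\int e^{\beta s}\tilde{Y}_{s^-}\,d(\tilde{A} - \tilde{A}')_s$ together with the right-jump reflection contributions from $\tilde{B} - \tilde{B}'$. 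I would take expectations so that the genuine martingale integrals ($\int e^{\beta s}\tilde{Y}_s\,d\tilde{M}_{s^+}$ and the stochastic $dW$ integral) vanish.

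\medskip

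The heart of the argument, and what I expect to be the main obstacle, is controlling the sign of the reflection terms. The claim I would establish is that $\int e^{\beta s}\tilde{Y}_{s^-}\,d(A - A')_s \le 0$ when paired against $\bar{A},\bar{A}'$ appropriately, i.e. that the combined contribution $\tilde{Y}_{s^-}\,d(\tilde A - \tilde A')_s \le 0$ and similarly for the $B$-terms against the right limits. This is where the Skorokhod conditions (\ref{sko1}), (\ref{sko2}) and the mutual-singularity constraint (\ref{mutualy singular}) enter: on $\{dA > 0\}$ one has $Y_{s^-} = \xi_{s^-} \le \bar{Y}_{s^-}$, so $\tilde{Y}_{s^-} \le 0$ there, while $dA$ pushes $\tilde{Y}$ up; on $\{dA' > 0\}$ one has $Y_{s^-} = \zeta_{s^-} \ge \bar{Y}_{s^-}$, giving the opposite sign, and the cross-terms from $\bar A, \bar A'$ are handled symmetrically. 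The $B, B'$ jump reflections are treated the same way via Remark \ref{remark4} and condition (\ref{sko2}). Once these terms are shown to be nonpositive in expectation, they are simply dropped.

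\medskip

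After discarding the nonpositive reflection terms and the vanishing martingale integrals, I am left with an inequality of the form $E\big[\int_0^T \beta e^{\beta s}\tilde{Y}_s^2\,ds\big] + \|\tilde Z\|_\beta^2 + \|\tilde M\|^2_{\mathbf{M}^2_\beta} \le 2\,E\big[\int_0^T e^{\beta s}|\tilde{Y}_{s^-}||g_s - \bar g_s|\,ds\big]$ (using $\tilde Y_T = 0$ and $\tilde Y_0 = 0$, since both solutions share $Y_0$). Applying the elementary inequality $2ab \le \frac{1}{\epsilon^2\beta}a^2 \cdot (\text{factor}) + \beta\epsilon^2 b^2$ — concretely $2|\tilde Y_{s^-}||g_s-\bar g_s| \le \beta \tilde Y_s^2 + \frac{1}{\beta}|g_s-\bar g_s|^2$ — the term $\beta\int e^{\beta s}\tilde Y_s^2$ is absorbed into the left side, yielding $\|\tilde Z\|_\beta^2 + \|\tilde M\|^2_{\mathbf{M}^2_\beta} \le \frac{1}{\beta}\|g - \bar g\|_\beta^2 \le \epsilon^2\|g-\bar g\|_\beta^2$ for $\beta > 1/\epsilon^2$, which is (\ref{l1}). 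For estimate (\ref{l11}), I would run the same computation but stop at an arbitrary $\tau \in \mathcal{T}_0^p$ rather than integrating fully, obtaining a bound on $e^{\beta\tau}\tilde Y_\tau^2$; then take the essential supremum over $\tau$ and expectation, controlling the martingale part by the Burkholder--Davis--Gundy inequality to introduce the constant $c$, and finally feed in the already-proved bound (\ref{l1}) to arrive at $\vertiii{\tilde Y}_\beta^2 \le 2\epsilon^2(1 + 8c^2)\|g - \bar g\|_\beta^2$.
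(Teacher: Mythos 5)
Your proposal follows essentially the same route as the paper's proof: apply the Gal'chouk--Lenglart formula (Corollary \ref{corollary}) to $e^{\beta t}(Y_t-\bar Y_t)^2$, discard the reflection terms after checking their sign via the Skorokhod conditions, absorb the drift term using $\beta>1/\epsilon^2$, take expectations to kill the martingale integrals, and use Burkholder--Davis--Gundy for the $\mathbf{S}^{2,p}$-estimate. Two small remarks: your claim that $\tilde Y_0=0$ because "both solutions share $Y_0$" is unjustified (the drivers differ, so the initial values need not coincide), but it is harmless since $\tilde Y_0^2\geq 0$ simply sits on the left-hand side and is dropped; and when you "reconstitute" $\Vert \tilde M\Vert^2_{\mathbf{M}^2_\beta}$ from the right-jump sum via $\Delta_+\tilde Y=\Delta\tilde M+\Delta\tilde B'-\Delta\tilde B$, a cross term $\sum e^{\beta s}\Delta\tilde M_s(\Delta\tilde B'_s-\Delta\tilde B_s)$ survives, whose expectation vanishes only because $B,B'$ are predictable and $E[\Delta\tilde M_\tau\mid\mathcal F_{\tau^-}]=0$ at predictable stopping times (the predictable stopping theorem) --- this is the one genuinely setting-specific step, which the paper spells out and your sketch leaves implicit.
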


\begin{proof} Let $\beta >0$ and  $\epsilon>0$ be such that $\beta >\frac{1}{\epsilon^2}$. We set $\tilde{Y}:= Y-\bar{Y}$, $\tilde{Z}:= Z-\bar{Z}$, $\tilde{M}:= M-\bar{M}$, $\tilde{A}:= A-\bar{A}$, $\tilde{A'}:= A'-\bar{A}'$, $\tilde{B}:= B-\bar{B}$, $\tilde{B'}:= B'-\bar{B}'$ and $\tilde{g}:= g-\bar{g}$. Note that $\tilde{Y}_T := \xi_T- \xi_T=0$. Further, $\tilde{Y}$ can be defined as follow:
\begin{align*}\label{Ytild}
\tilde{Y}_t = \int_t^T \tilde{g}_s ds - \int_t^T \tilde{Z}_s dW_s &-(\tilde{M}_{T^-} - \tilde{M}_{t^-}) 
+ \tilde{A}_T - \tilde{A}_t - (\tilde{A'}_T - \tilde{A'}_t) \\
&+ \tilde{B}_{T^-} - \tilde{B}_{t^-} - (\tilde{B'}_{T^-} - \tilde{B'}_{t^-}), \,\,\,\,\, \text{a.s. for all}  \,\,\, t \in[0,T]. \numberthis
\end{align*}
From (\ref{Ytild}), it is easily seen that $\tilde{Y}$ is an optional strong semimartingale in the vocabulary of \cite{Meyer_Un cours sur les integrales stochastiques} with decomposition
\begin{equation*}
\tilde{Y}_t = \tilde{Y}_0 + \underline{M}_t + \underline{A}_t + \underline{B}_t,
\end{equation*}
where, $\underline{M}_t:= \int_0^t \tilde{Z}_s dW_s + \tilde{M}_{t^-}, \,\,\,$  $\underline{A}_t := - \int_0^t \tilde{g}_s ds -(\tilde{A}_t -\tilde{A'}_t)$ and  $\underline{B}_t:= -\tilde{B}_{t^-} + \tilde{B'}_{t^-}$. Applying the change of variables formula for optional semimartingales (cf.  Theorem 8.2 in \cite{Gal'chouk} and Section 3 in \cite{Lenglart}) or (cf. Corollary A.2 in \cite{Reflected BSDEs when the obstacle is not right-continuous and optimal stopping}) to  $e^{\beta t} \tilde{Y}^2_t$ and using the property $<\tilde{M}^c,W>=0$, we obtain almost surely, for all $t\in [0,T]$, 
\begin{align*}
e^{\beta T} \tilde{Y}^2_T &=  e^{\beta t} \tilde{Y}^2_t + \int_{]t,T]} \beta e^{s\beta} \tilde{Y}_s^2 ds - 2 \int_{]t,T]} e^{\beta s} \tilde{Y}_{s^-} \tilde{g}_s ds + \int_{]t,T]} e^{\beta s} d<\tilde{M}^c>_s \\
&- 2 \int_{]t,T]} e^{s\beta} \tilde{Y}_{s^-} d\tilde{A}_s + 2 \int_{]t,T]} e^{s\beta} \tilde{Y}_{s^-} d\tilde{A'}_s - 2 \int_{]t,T]} e^{s\beta} \tilde{Y}_{s^-} d\tilde{B}_s + 2 \int_{]t,T]} e^{s\beta} \tilde{Y}_{s^-} d\tilde{B'}_s \\
&+ 2 \int_{[t,T[} e^{\beta s} \tilde{Y}_s \tilde{Z}_s dW_s + 2 \int_{[t,T[} e^{\beta s} \tilde{Y}_s d\tilde{M}_s + \int_{]t,T]} e^{\beta s} \tilde{Z}_s^2 ds  \\
&+ \sum_{t<s\leq T} e^{\beta s} (\tilde{Y}_s - \tilde{Y}_{s^-})^2 + \sum_{t\leq s< T} e^{\beta s} (\tilde{Y}_{s^+} - \tilde{Y}_s)^2.
\end{align*}
Since $\tilde{Y}_T=0$, we obtain: almost surely, for all $t\in [0,T]$,
\begin{align*} \label{---}
e^{\beta t} \tilde{Y}^2_t &+ \int_{]t,T]} e^{\beta s} \tilde{Z}_s^2 ds  + \int_{]t,T]} e^{\beta s} d<\tilde{M}^c>_s = - \int_{]t,T]} \beta e^{s\beta} \tilde{Y}_s^2 ds+ 2 \int_{]t,T]} e^{\beta s} \tilde{Y}_{s^-} \tilde{g}_s ds  \\
&+ 2 \int_{]t,T]} e^{s\beta} \tilde{Y}_{s^-} d\tilde{A}_s - 2 \int_{]t,T]} e^{s\beta} \tilde{Y}_{s^-} d\tilde{A'}_s + 2 \int_{]t,T]} e^{s\beta} \tilde{Y}_{s^-} d\tilde{B}_s - 2 \int_{]t,T]} e^{s\beta} \tilde{Y}_{s^-} d\tilde{B'}_s \\
&- 2 \int_{[t,T[} e^{\beta s} \tilde{Y}_s \tilde{Z}_s dW_s - 2 \int_{[t,T[} e^{\beta s} \tilde{Y}_s d\tilde{M}_s  \\
&- \sum_{t<s\leq T} e^{\beta s} (\tilde{Y}_s - \tilde{Y}_{s^-})^2 - \sum_{t\leq s< T} e^{\beta s} (\tilde{Y}_{s^+} - \tilde{Y}_s)^2. \numberthis
\end{align*}
 By applying the inequality $2ab \leq (\frac{a}{\epsilon})^2 + \epsilon^{2} b^2$, valid for all $(a, b) \in \mathbb{R}^2$, to  \textbf{the second term} on the r.h.s. of equality (\ref{---}), we get: a.s. for all $t \in [0, T]$,
\begin{equation*}
- \int_{]t,T]} \beta e^{s\beta} \tilde{Y}_s^2 ds + 2 \int_{]t,T]} e^{\beta s} \tilde{Y}_{s^-} \tilde{g}_s ds \leq (\frac{1}{\epsilon^2}- \beta) \int_{]t,T]} e^{\beta s} \tilde{Y}_{s^-}^2 ds + \epsilon^2 \int_{]t,T]} e^{\beta s} \tilde{Y}_{s^-} \tilde{g}^2_s ds.
\end{equation*} 
 Since $\beta < \frac{1}{\epsilon^2}$, we get: a.s. for all $t \in [0, T]$,
 \begin{equation}  
 \int_{]t,T]} \beta e^{s\beta} \tilde{Y}_s^2 ds + 2 \int_{]t,T]} e^{\beta s} \tilde{Y}_{s^-} \tilde{g}_s ds \leq \epsilon^2 \int_{]t,T]} e^{\beta s} \tilde{g}^2_s ds. 
  \end{equation}
   Our next objective is to show that \textbf{\textcolor{black}{the third term and fourth}} term on the right-hand side of inequality (\ref{---}) is non-positive.  The proof is based on property (\ref{sko1}) and the inequalities $\xi \leq Y \leq \zeta$ and $\xi \leq \bar{Y} \leq \zeta$. Similar arguments apply to prove that \textbf{the fifth and sixth} term  on the right-hand side of inequality (\ref{---}) is non-positive.
   The details, are similar to those in the case of predictable RBSDE with one lower obstacle (cf. proof of Lemma 2 in \cite{Bouhadou}). Hence, equality (\ref{---}) can be put in this form:
\begin{align*} \label{l2}
e^{\beta t} \tilde{Y}^2_t &+ \int_{]t,T]} e^{\beta s} \tilde{Z}_s^2 ds  + \int_{]t,T]} e^{\beta s} d<\tilde{M}^c>_s \leq \epsilon^2 \int_{]t,T]} e^{\beta s}\tilde{g}^2_s ds \\  
&- 2 \int_{[t,T[} e^{\beta s} \tilde{Y}_s \tilde{Z}_s dW_s - 2 \int_{[t,T[} e^{\beta s} \tilde{Y}_s d\tilde{M}_s \\
&- \sum_{t\leq s< T} e^{\beta s} (\tilde{Y}_{s^+} - \tilde{Y}_s)^2.
\numberthis
\end{align*}
We are now in a position to derive an estimates for $\Vert \tilde{Z} \Vert^2_{\beta}$ and $\Vert \tilde{M} \Vert^2_{\mathbf{M}^2_\beta}$. \\ First, by using the fact that $\Delta_+ \tilde{Y} = \Delta \tilde{M} + \Delta \tilde{B'} - \Delta \tilde{B}$, we get:
\begin{align*} \label{l3}
\sum_{t<s\leq T} e^{\beta s} (\Delta \tilde{M}_s)^2 &- \sum_{t\leq s <T} e^{\beta s} (\Delta_+ \tilde{Y})^2 = - \sum_{t\leq s <T} e^{\beta s} (\Delta \tilde{B'}_s - \Delta \tilde{B}_s) \\ &-2 \sum_{t\leq s <T} e^{\beta s} \Delta \tilde{M}_s (\Delta \tilde{B'}_s - \Delta \tilde{B}_s). \numberthis
\end{align*}
Recall that, for all $t\in [0,T]$,  $[\tilde{M}]_t := <\tilde{M}^c,\tilde{M}^c>_t + \sum_{s\leq t} \Delta \tilde{M}^2_s $. This,  together with equality (\ref{l3}), yields 
\begin{align*} \label{l4}
e^{\beta t} \tilde{Y}^2_t &+ \int_{]t,T]} e^{\beta s} \tilde{Z}_s^2 ds  + \int_{]t,T]} e^{\beta s} d[\tilde{M}]_s \leq \epsilon^2 \int_{]t,T]} e^{\beta s}\tilde{g}^2_s ds \\  
&- 2 \int_{[t,T[} e^{\beta s} \tilde{Y}_s \tilde{Z}_s dW_s - 2 \int_{[t,T[} e^{\beta s} \tilde{Y}_s d\tilde{M}_s \\
&- 2\sum_{t\leq s< T} e^{\beta s} \Delta \tilde{M}_s (\Delta \tilde{B'}_s - \Delta \tilde{B}_s).
\numberthis
\end{align*}
 By applying Cauchy-Schwartz inequality, we get 
\begin{equation}
 E \Bigg[\sqrt{\int_0^T e^{2 \beta s} \tilde{Y}^2_s \tilde{Z}^2_s ds} \Bigg]\leq \vertiii{\tilde{Y}}_{\mathbf{S}^{2,p}} \Vert \tilde{Z} \Vert_{2 \beta}  < \infty.
 \end{equation}
Hence, the term $\int_0^t e^{\beta s} \tilde{Y}_s \tilde{Z}_s dW_s$ has zero expectation. The same result hold for the martingale $\int_0^t e^{\beta s} \tilde{Y}_s d\tilde{M}_s$, as
\begin{equation}
 E \Bigg[\sqrt{\int_0^T e^{2\beta s} \tilde{Y}^2_s d[\tilde{M}]_s}\Bigg] \leq \vertiii{\tilde{Y}}_{\mathbf{S}^{2,p}}  \Vert \tilde{M} \Vert_{\mathbf{M}^2_\beta} < \infty.
 \end{equation}
Let us show that $E \big[\sum_{0\leq s< T} e^{\beta s} \Delta \tilde{M}_s (\Delta \tilde{B'}_s - \Delta \tilde{B}_s) \big ]=0$. The process $\tilde{M}$ is a right-continuous uniformaly integrable martingale. Accordingly, for each predictable stopping time $\tau$, we have $E[ \Delta \tilde{M}_\tau \vert \mathcal{F}_{\tau^-}]= 0$ (cf. e.g., Theorem 4.5, p. 358 in ~\cite{Nikeghbali}). Otherwise, $\tilde{B}$ and $\tilde{B'}$ are  predictable, then $(\Delta \tilde{B'}_\tau - \Delta \tilde{B}_\tau)$ is $\mathcal{F}_{\tau^-}$-measurable. This gives: 
$$E[(\Delta \tilde{B'}_\tau - \Delta \tilde{B}_\tau) \tilde{M}_\tau  \vert \mathcal{F}_{\tau^-}]= (\Delta \tilde{B'}_\tau - \Delta \tilde{B}_\tau) E[\tilde{M}_\tau \vert \mathcal{F}_{\tau^-}]=0$$ 
 We thus get $E \big[\sum_{0\leq s< T} e^{\beta s} \Delta \tilde{M}_s (\Delta \tilde{B'}_s - \Delta \tilde{B}_s) \big ]=0$. \\
By taking expectation on both sides of (\ref{l4}) with t=0, we obtain:
\begin{align*} \label{estimate1}
\tilde{Y}^2_0 + \Vert \tilde{Z} \Vert_\beta^2 + \Vert \tilde{M} \Vert_{\mathbf{M}^2_\beta}^2 \leq \epsilon^2 \Vert \tilde{g}\Vert^2_\beta,
\numberthis
\end{align*}
which established the first inequality of Lemma \ref{l1}.\\

What is left is to determine the estimate for $\vertiii{\tilde{Y}}^2_{\beta}$. From inequality (\ref{l4}), we obtain, for all $\tau \in \mathcal{T}^p_0$
\begin{align*} 
e^{\beta \tau} \tilde{Y}^2_\tau &\leq \epsilon^2 \int_0^\tau e^{\beta s}\tilde{g}^2_s ds + 2 \int_0^\tau e^{\beta s} \tilde{Y}_s \tilde{Z}_s dW_s + 2 \int_0^\tau e^{\beta s} \tilde{Y}_s d\tilde{M}_s \\
&- 2 \int_0^T e^{\beta s} \tilde{Y}_s \tilde{Z}_s dW_s + 2 \int_0^T e^{\beta s} \tilde{Y}_s d\tilde{M}_s \,\,\, \text{a.s.}
\numberthis
\end{align*}
By taking first the essential supremum over $\tau \in \mathcal{T}^p_0$, and then the expectation on both sides of the  previous inequality, we obtain
\begin{align*} \label{l5} 
E[\esssup_{\tau \in \mathcal{T}^p_0} e^{\beta \tau} \tilde{Y}^2_\tau ] \leq  \epsilon^2 \Vert \tilde{g}\Vert^2_\beta + 
2 E[\esssup_{\tau \in \mathcal{T}^p_0} \vert \int_0^\tau e^{\beta s} \tilde{Y}_s \tilde{Z}_s dW_s \vert]  + 2 E[\esssup_{\tau \in \mathcal{T}^p_0} \int_0^\tau e^{\beta s} \tilde{Y}_s d\tilde{M}_s \vert].
\numberthis
\end{align*}
By Burknolder-Davis-Gundy inequalities (applied with $p=1$), we get 
\begin{align*}
2 E[\esssup_{\tau \in \mathcal{T}^p_0} \vert \int_0^\tau e^{\beta s} \tilde{Y}_s \tilde{Z}_s dW_s \vert] &\leq 2c E \Bigg[\sqrt{\int_0^T e^{2 \beta s} \tilde{Y}_s^2 \tilde{Z}^2_s ds } \Bigg] \\
&\leq 2c E\Bigg [\sqrt{\esssup_{\tau \in \mathcal{T}^p_0} e^{\beta \tau} \tilde{Y}^2_\tau \int_0^T e^{\beta s} \tilde{Z}^2_s ds} \Bigg] \\
&\leq \frac{1}{4} \vertiii{\tilde{Y}}^2_{\beta} + 4 c^2 \Vert \tilde{Z} \Vert^2_\beta, \numberthis \label{l6}
\end{align*}
where $c$ is a positive "universal" constant (which does not depend on the other
parameters). By using similar arguments, we get 
\begin{align*} \label{l7}
2 E[\esssup_{\tau \in \mathcal{T}^p_0} \int_0^\tau e^{\beta s} \tilde{Y}_s d\tilde{M}_s \vert] &\leq \frac{1}{4} \vertiii{\tilde{Y}}^2_{\beta} + 4 c^2 \Vert \tilde{M} \Vert^2_{\mathbf{M}_\beta^2}. \numberthis
\end{align*}
From the inequalities (\ref{l5}), (\ref{l6}) and (\ref{l7}), we get 
\begin{equation*}
\vertiii{\tilde{Y}}^2_{\beta} \leq 2 \epsilon^2 \Vert \tilde{g} \Vert_\beta^2 + 8 c^2 \Vert \tilde{Z} \Vert^2_\beta + 8 c^2 \Vert \tilde{M} \Vert^2_{\mathbf{M}_\beta^2}.
\end{equation*}
This inequality joined with the estimates (\ref{estimate1}), give the following estimate for $\vertiii{\tilde{Y}}^2_{\beta}$:
\begin{equation}
\vertiii{\tilde{Y}}^2_{\beta} \leq 2\epsilon^2(1 + 8c^2) \Vert \tilde{g} \Vert^2_{\beta}.
\end{equation}
This completes the proof. 
\end{proof}

\vspace{0.5 cm} From this result, we derive the following uniqueness result for the predictable DRBSDE associated with the driver process $g(t)$.

\begin{theorem}
Let $\xi$ and $\zeta$ be two predictable admissible barriers satisfying Mokobodzki's condition. The predictable DRBSDE (\ref{eq*}) associated with driver process $g(t)$ admits a unique solution $(Y,Z,M,A,B,A',B') \in \mathbf{S}^{2,p} \times \mathbf{H}^2 \times \mathbf{M}^{2,\perp} \times (\mathbf{S}^{2,p})^2 \times (\mathbf{S}^{2,p})^2$. 
\end{theorem}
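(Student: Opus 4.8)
The plan is to establish existence and uniqueness separately, leaning on the machinery already assembled in this section. Existence is essentially already done: Theorem \ref{existence} produces a solution $(Y,Z,M,A,B,A',B')$ in the required spaces under Mokobodzki's condition, with $Y$ given explicitly in terms of the minimal supermartingales $\mathbf{J}^p$ and $\bar{\mathbf{J}}^p$. So the content of this final theorem beyond Theorem \ref{existence} is uniqueness, and that is where the a priori estimate of Lemma \ref{lemma uniqueness} does the work.

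For uniqueness, I would take two solutions $(Y,Z,M,A,B,A',B')$ and $(\bar Y,\bar Z,\bar M,\bar A,\bar B,\bar A',\bar B')$ associated with the \emph{same} driver process $g(t)$ and the same barriers $\xi,\zeta$, and apply Lemma \ref{lemma uniqueness} with $\bar g = g$. Since then $\tilde g := g-\bar g = 0$, the estimate \eqref{l11} gives $\vertiii{Y-\bar Y}_{\beta}^2 \le 2\epsilon^2(1+8c^2)\,\Vert g-\bar g\Vert_{\beta}^2 = 0$, and similarly \eqref{l1} forces $\Vert Z-\bar Z\Vert_{\beta}^2 = 0$ and $\Vert M-\bar M\Vert_{\mathbf{M}^2_\beta}^2 = 0$. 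Because each of these weighted norms is equivalent to the corresponding norm on $\mathbf{S}^{2,p}$, $\mathbf{H}^2$, $\mathbf{M}^2$ respectively, I conclude $Y = \bar Y$ (up to indistinguishability, via the Section Theorem \ref{section} applied to the two predictable processes), $Z = \bar Z$ in $\mathbf{H}^2$, and $M = \bar M$ in $\mathbf{M}^{2,\perp}$.

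It then remains to recover uniqueness of the four increasing processes $A,A',B,B'$. Having fixed $Y=\bar Y$, $Z=\bar Z$, $M=\bar M$, I would rewrite equation \eqref{eq**} for each solution and subtract, which shows that the finite-variation term $A-A'+B-B'$ (i.e.\ its continuous and purely-discontinuous parts) is pathwise determined by $Y,Z,M$ and is therefore the same for both solutions. Thus $A-A' = \bar A - \bar A'$ and $B-B' = \bar B - \bar B'$. The separation of this common difference into its two monotone components is exactly where the mutual-singularity constraint $dA_t \perp dA'_t$, $dB_t \perp dB'_t$ from Definition \ref{def1}(iv) is indispensable: by the canonical (Jordan-type) decomposition of an RCLL predictable process of integrable variation into mutually singular increasing parts (the same Proposition A.7 of \cite{Generalized Dynkin Games and Doubly reflected BSDEs with jumps} invoked in the proof of Proposition \ref{equivalent proposition}), such a decomposition is unique, so $A=\bar A$, $A'=\bar A'$, $B=\bar B$, $B'=\bar B'$.

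The main obstacle I anticipate is not the norm-cancellation step, which is routine once Lemma \ref{lemma uniqueness} is in hand, but rather the clean justification that the two increasing pairs coincide: one must argue that identifying the total finite-variation process plus the singularity constraint genuinely pins down each of $A,A',B,B'$, and that the continuous parts $(A,A')$ and the purely-discontinuous parts $(B,B')$ decouple correctly (using Remarks \ref{remark2} and \ref{remark4}, which identify $\Delta A,\Delta A'$ with $(\Delta Y)^\mp$ and $\Delta B,\Delta B'$ with $({}^pY^+-Y)^\mp$, all of which depend only on the already-identified $Y$ and $M$). I would present the norm-equality argument in full and then refer to the uniqueness of the canonical mutually-singular decomposition for the increasing processes, mirroring the bookkeeping done at the end of Proposition \ref{equivalent proposition}.
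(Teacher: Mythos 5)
Your proposal is correct and follows the paper's proof in its essential structure: existence is delegated to Theorem \ref{existence}, and uniqueness of $(Y,Z,M)$ is obtained by applying the a priori estimates of Lemma \ref{lemma uniqueness} with $\bar g=g$, so that (\ref{l1}) and (\ref{l11}) force the weighted norms of the differences to vanish. The only place you diverge is in the final bookkeeping for the four nondecreasing processes: the paper disposes of this in one line by invoking the uniqueness of the Mertens decomposition of predictable strong supermartingales (applied to the supermartingales $J^{g,p}$, $\bar J^{g,p}$ built from the solution), whereas you identify the jumps of $B,B'$ and $A,A'$ directly from Remarks \ref{remark2} and \ref{remark4} (which express them in terms of the already-identified $Y$) and then separate $A-A'$ into its monotone components via the uniqueness of the mutually-singular (Jordan-type) decomposition guaranteed by condition (iv) of Definition \ref{def1}. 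Both routes are valid; yours is more explicit about why the mutual-singularity constraint is exactly what pins down each of $A,A',B,B'$ individually, which the paper's terse appeal to Mertens decomposition leaves implicit, so nothing is missing from your argument.
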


\begin{proof}
We only need to show the uniqueness of the solution, Theorem \ref{existence} gives the existence. For this purpose, let $(Y,Z,M,A,B,A',B')$ be a solution of the predictable DRBSDE associated with driver process $g(t)$ and obstacles $(\xi, \zeta)$. The previous estimates (\ref{l1}) and (\ref{l2}) in Lemma \ref{lemma uniqueness} (applied with $g=\bar{g}$), gives the uniqueness of $(Y, Z, M)$. The uniqueness of $A, B, A'$ and $B'$ follows from the uniqueness of Mertens decomposition of predictable  
strong supermartingales, which completes the proof. 
\end{proof}


\section{The general case}

\hspace*{0.5 cm} In this section, we are given a Lipschitz driver $g$. We prove existence and uniqueness of the solution to
the predictable DRBSDE from Definition \ref{def1}, in the case of a general Lipschitz driver $g$. The proof is based on a fixed point theorem (applied in an appropriate Banach space) and the estimates given in Lemma \ref{lemma uniqueness}.\\

For each $\beta>0$, we write $\mathbf{K}^2_\beta$ for the space $\mathbf{S}^{2,p} \times \mathbf{H}^2$ equipped with the norm: $$\Vert (Y,Z) \Vert^2_{\mathbf{K}^2_\beta}:= \vertiii{Y}^2_\beta + \Vert Z\Vert_\beta^2, \,\,\,\, \text {for} \,\,\, (Y,Z) \in \mathbf{S}^{2,p} \times \mathbf{H}^2. $$
Note that since $(\mathbf{S}^{2,p},\vertiii{.}^2_\beta )$ and $(\mathbf{H}^2, \Vert .\Vert_\beta^2 )$ are Banach spaces, $\mathbf{K}^2_\beta$ is also a Banach space. 

\begin{theorem}
Let $\xi$ and $\zeta$ be two predictable admissible barriers satisfying Mokobodzki's condition and let $g$ be a Lipschitz driver. There exists a unique solution to the predictable DRBSDE (\ref{eq*}) associated with parameters $(g,\xi,\zeta)$.
\end{theorem}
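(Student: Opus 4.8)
The plan is to prove existence and uniqueness for the general Lipschitz driver by a Banach fixed point argument, leveraging the $(y,z)$-independent existence theorem (Theorem \ref{existence}) and the a priori estimates of Lemma \ref{lemma uniqueness}. First I would define a map $\Phi$ on the Banach space $\mathbf{K}^2_\beta = \mathbf{S}^{2,p}\times\mathbf{H}^2$ as follows: given $(U,V)\in\mathbf{K}^2_\beta$, set $g_U(t):=g(t,U_t,V_t)$. Since $g$ is a Lipschitz driver and $(U,V)$ lie in the appropriate spaces, one checks that $g_U\in\mathbf{H}^{2,p}$, so that the predictable DRBSDE with the \emph{frozen} driver process $g_U$ and barriers $(\xi,\zeta)$ falls into the scope of the previous section. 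By the existence-and-uniqueness result for driver processes, this frozen equation admits a unique solution $(Y,Z,M,A,B,A',B')$; I then define $\Phi(U,V):=(Y,Z)$. The whole problem reduces to showing that $\Phi$ has a unique fixed point, since a fixed point $(Y,Z)=\Phi(Y,Z)$ is precisely a solution of the predictable DRBSDE with the genuine driver $g(t,Y_t,Z_t)$, and the remaining components $(M,A,B,A',B')$ come along with it.

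Next I would show that $\Phi$ is a contraction on $\mathbf{K}^2_\beta$ for $\beta$ chosen large enough. Take two inputs $(U,V)$ and $(\bar U,\bar V)$ with images $(Y,Z)=\Phi(U,V)$ and $(\bar Y,\bar Z)=\Phi(\bar U,\bar V)$, solutions of the DRBSDE with driver processes $g_U$ and $g_{\bar U}$ respectively. Applying Lemma \ref{lemma uniqueness} with $g:=g_U$ and $\bar g:=g_{\bar U}$ gives, for every $\epsilon>0$ and $\beta>1/\epsilon^2$,
\begin{equation*}
\Vert Z-\bar Z\Vert_\beta^2 \leq \epsilon^2\Vert g_U-g_{\bar U}\Vert_\beta^2,\qquad
\vertiii{Y-\bar Y}_\beta^2 \leq 2\epsilon^2(1+8c^2)\Vert g_U-g_{\bar U}\Vert_\beta^2.
\end{equation*}
The Lipschitz property of $g$ yields the pointwise bound $|g_U-g_{\bar U}|\leq K(|U-\bar U|+|V-\bar V|)$ $dP\otimes dt$-a.s., whence
\begin{equation*}
\Vert g_U-g_{\bar U}\Vert_\beta^2 \leq 2K^2\Bigl(\vertiii{U-\bar U}_\beta^2 + \Vert V-\bar V\Vert_\beta^2\Bigr) = 2K^2\,\Vert(U,V)-(\bar U,\bar V)\Vert_{\mathbf{K}^2_\beta}^2,
\end{equation*}
where I have used $\Vert U-\bar U\Vert_\beta^2\leq T\,\vertiii{U-\bar U}_\beta^2$ to pass from the integral norm of $U-\bar U$ to its strong-supremum norm. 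Combining these inequalities,
\begin{equation*}
\Vert\Phi(U,V)-\Phi(\bar U,\bar V)\Vert_{\mathbf{K}^2_\beta}^2 \leq 2K^2\epsilon^2\bigl(2+16c^2+1\bigr)\,\Vert(U,V)-(\bar U,\bar V)\Vert_{\mathbf{K}^2_\beta}^2.
\end{equation*}
Fixing $\epsilon$ small so that the constant $2K^2\epsilon^2(3+16c^2)<1$, and then taking $\beta>1/\epsilon^2$, makes $\Phi$ a strict contraction.

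By the Banach fixed point theorem, $\Phi$ admits a unique fixed point $(Y,Z)\in\mathbf{K}^2_\beta$. Unwinding the construction, the associated solution $(Y,Z,M,A,B,A',B')$ of the frozen equation with driver $g(t,Y_t,Z_t)$ is exactly a solution of the predictable DRBSDE with parameters $(g,\xi,\zeta)$; uniqueness of $(Y,Z)$ is the uniqueness of the fixed point, while uniqueness of $M$ follows from the orthogonal decomposition of the martingale part and uniqueness of $A,B,A',B'$ from the uniqueness of the Mertens decomposition of the associated predictable strong supermartingales (as in the $(y,z)$-independent uniqueness theorem). I expect the main obstacle to be bookkeeping rather than conceptual: one must verify carefully that $g_U\in\mathbf{H}^{2,p}$ so that the frozen equation is genuinely covered by Theorem \ref{existence}, and one must handle the mismatch between the $\mathbf{H}^2$-type norm $\Vert\cdot\Vert_\beta$ controlling the $y$-dependence of $g$ and the strong-supremum norm $\vertiii{\cdot}_\beta$ in which $Y$ naturally lives. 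The factor $T$ arising from this comparison is harmless since $\epsilon$ can always be shrunk further, but it must be tracked explicitly so that the contraction constant is genuinely below $1$.
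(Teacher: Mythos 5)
Your proposal is correct and follows essentially the same route as the paper: a Banach fixed point argument for the map $\Phi(U,V)=(Y,Z)$ built from the frozen-driver DRBSDE, with the contraction property obtained from the a priori estimates of Lemma \ref{lemma uniqueness}, the Lipschitz bound on $g$, and the comparison $\Vert U-\bar U\Vert_\beta^2\leq T\vertiii{U-\bar U}_\beta^2$. Your tracking of the constant (with $K^2$ rather than the paper's $K$) is in fact the more careful version of the same computation.
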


\begin{proof} 
For $\beta >0$, we introduce a mapping $\Phi$ from $\mathbf{K}^2_\beta$ into itself. This map is defined by: for a given $(U,V)\in \mathbf{K}^2_\beta$, $\Phi(U,V) := (Y,Z)$, where $Y$, $Z$ are the first two components of the solution $(Y,Z,M,A,B,A',B')$ to the predictable DRBSDE associated with driver $g_t:= g(t,U_t,V_t)$ and with the pair of predictable admissible barriers $(\xi,\zeta)$.
 Note that by Theorem \ref{existence}, the mapping $\Phi$ is well-defined. \\
 
 Our goal is to prove that with a convenient choice of the parameter $\beta > 0$, $\Phi$ is a contraction and hence, by the Banach fixed-point theorem, admits a unique fixed point $(Y,Z) \in \mathbf{K}_\beta^2$. By the definition of $\Phi$, the process $(Y,Z)$ will be equal to the first two components of the unique solution $(Y,Z,M,A,B,A',B')$ to the predictable DRBSDE associated with the driver process $h(\omega,t):= g(\omega,t,Y_t(\omega), Z_t(\omega))$ and with the pair of barriers $(\xi,\zeta)$. Thus, we have the existence and uniqueness of the solution to the predictable DRBSDE  (\ref{eq*}). \\
 
  To this end, consider $(U,V)$ and $(\bar{U},\bar{V})$ two elements of $\mathbf{K}^2_\beta$. we set $(Y,Z)= \Phi(U,V)$, $(\bar{Y}, \bar{Z})= \Phi(\bar{U},\bar{V})$, $\tilde{Y}:= Y - \bar{Y}$, $\tilde{Z}:= Z - \bar{Z}$, $\tilde{U}:= U - \bar{U}$ and $\tilde{V}:= V - \bar{V}$. Hence, Lemma \ref{lemma uniqueness} shows that 
 \begin{equation}
 \vertiii{\tilde{Y}}^2_\beta + \Vert \tilde{Z} \Vert^2_{\beta} \leq \epsilon^2(3+ 16 c^2) \Vert g(t, U_t,V_t)- g(t,\bar{U}_t, \bar{V}_t \Vert^2_{\beta}.
 \end{equation}
 By using the Lipschitz property of $g$, we obtain
 \begin{align*}\label{m1}
  \vertiii{\tilde{Y}}^2_\beta + \Vert \tilde{Z} \Vert^2_{\beta}  \leq 2\epsilon^2K^2 (3+ 16 c^2) \Big[\Vert \tilde{U} \Vert^2_{\beta} + \Vert \tilde{V} \Vert^2_{\beta} \Big].
 \numberthis
 \end{align*}
Note that $\Vert \tilde{U} \Vert^2_{\beta} \leq T \vertiii{\tilde{U}}^2_\beta$. Indeed, by Fubini's theorem, we get
\begin{align*}
\Vert \tilde{U} \Vert^2_{\beta} := E[ \int_0^T e^{\beta s} \vert U_s \vert^2 ds ] &=  \int_0^T E[ e^{\beta s} \vert U_s \vert^2] ds; \\ & \leq  \int_0^T E[ \esssup_{\tau \in \mathcal{T}_0^p} e^{\beta \tau} \vert U_\tau \vert^2] ds; \\
& \leq T E[ \esssup_{\tau \in \mathcal{T}_0^p} e^{\beta \tau} \vert U_\tau \vert^2] = T \vertiii{\tilde{U}}^2_\beta.
\end{align*}
This, combined with  (\ref{m1}), gives 
\begin{align*}
  \vertiii{\tilde{Y}}^2_\beta + \Vert \tilde{Z} \Vert^2_{\beta}  \leq 2K^2 (1+T)\epsilon^2(3+ 16 c^2) \Big[ \vertiii{\tilde{U}}^2_\beta + \Vert \tilde{V} \Vert^2_{\beta} \Big].
 \numberthis
 \end{align*}
Consequently, by choosing $\epsilon>0$ such that $2K^2 (1+T)\epsilon^2(3+ 16 c^2)<1$ and $\beta$ such that $\beta \geq \frac{1}{\epsilon^2}$, we deduce that the mapping $\Phi$ is a contraction, which completes the proof.

 
\end{proof}

\section{Appendix}

\hspace*{0.5 cm} Let $T$ be a fixed positive real number. Let $\xi=(\xi_t)_{t\in[0,T]}$ be a predictable process in $\mathbf{S}^{2,p}$, called obstacle or barrier in $\mathbf{S}^{2,p}$. 

\begin{definition}(One barrier predictable reflected BSDE with driver 0) \label{def2} \\ A process $(Y,Z,M,A,B) \in \mathbf{S}^{2,p} \times \mathbf{H}^2 \times \mathbf{M}^{2,\perp} \times (\mathbf{S}^{2,p})^2$ is said to be solution to the predictable reflected BSDE with (lower) barrier $\xi$ and driver 0, if  
\begin{multline} \label{PRBSDE}
Y_\tau=\xi_T - \int_\tau^T Z_s dWs - (M_{T^-}-M_{\tau^-}) +  A_T-A_\tau + B_{T^-}-B_{\tau^-}, 
\end{multline}
with 
\begin{enumerate}
\item[(i)] $ \xi_\tau \leq Y_\tau$ a.s. for all $\tau\in \mathcal{T}_0^p$,
\item[(ii)] $A$ is a nondecreasing right-continuous process with $A_0=0$ and such that 
\begin{equation}
\int_0^T \textbf{1}_{\lbrace Y_t > \xi_t \rbrace} dA^c_t=0 \,\,\, \text{a.s. and     }  (Y_{\tau^-} - \xi_{\tau^-})(A^d_\tau - A^d_{\tau^-})=0 \,\,\, \text{a.s. for all $\tau\in \mathcal{T}_0^p$}, 
\end{equation}
\item[(iii)] $B$ is a nondecreasing right-continuous adapted purely discontinuous process with $B_{0^-}=0$, and such that
\begin{equation}
(Y_\tau-\xi_\tau)(B_\tau -B_{\tau^-})=0 \,\,\, \text{a.s. for all $\tau \in \mathcal{T}_0^p$.} \label{sko2'}
\end{equation}
\end{enumerate}
\end{definition}

The following result established by S. Bouhadou and Y. Ouknine in \cite{Bouhadou} (see Theorem 2, p. 10):

\begin{proposition} \label{existence PBSDE}
Let $\xi$ be a process in $\mathbf{S}^{2,p}$. There exists a unique solution \\ $(Y,Z,M,A,B) \in  \mathbf{S}^{2,p} \times \mathbf{H}^2 \times \mathbf{M}^{2,\perp} \times (\mathbf{S}^{2,p})^2$ of the predictable reflected BSDE from Definition \ref{def2}, and for each stopping time $\tau \in \mathcal{T}^p_0$, we have 
\begin{equation*}
Y_\tau = \esssup_{S \in \mathcal{T}^p_\tau} E(\xi_S \vert \mathcal{F}_{\tau^-}) \,\,\, \text{a.s.}
\end{equation*}
\end{proposition}

 Here are some elementary properties of this operator.
\begin{lemma} \label{incr opera}
The operator $\mathcal{P}re$ is nondecreasing, i.e. for $\xi, \xi' \in \mathbf{S}^{2,p}$ such that $\xi \leq \xi'$ we have $\mathcal{P}re[\xi] \leq \mathcal{P}re[\xi']$. Further, for each $\xi \in \mathbf{S}^{2,p}$, $\mathcal{P}re[\xi]$ is a predictable strong supermartingale and satisfies $\mathcal{P}re[\xi] \geq \xi$.
\end{lemma}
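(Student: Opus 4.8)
The plan is to read the three assertions directly off the defining reflected BSDE~(\ref{PRBSDE}) together with its Skorokhod conditions, treating domination and the supermartingale property by direct computation and leaving monotonicity to a minimality argument. Throughout write $Y := \mathcal{P}re[\xi]$ and let $(Y,Z,M,A,B)$ be the associated solution from Definition~\ref{def2}.

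First I would dispatch the domination $\mathcal{P}re[\xi] \geq \xi$, which is essentially constraint (i) of Definition~\ref{def2}: we have $\xi_\tau \leq Y_\tau$ a.s.\ for every $\tau \in \mathcal{T}_0^p$. Since $\xi$ and $Y$ are both predictable, were the predictable set $\{\xi > Y\}$ non-evanescent, the section theorem (Theorem~\ref{section}) would furnish a predictable stopping time on whose graph $\xi > Y$ with positive probability, contradicting constraint~(i). Hence $\xi \leq Y$ up to indistinguishability.

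Next, for the strong supermartingale property, I would fix predictable times $S \leq \tau$ and subtract~(\ref{PRBSDE}) written at $S$ and at $\tau$, obtaining
\[
Y_S - Y_\tau = -(\mathcal{M}_{\tau^-} - \mathcal{M}_{S^-}) + (A_\tau - A_S) + (B_{\tau^-} - B_{S^-}),
\]
where $\mathcal{M}_t := \int_0^t Z_s\, dW_s + M_t$ is a c\`adl\`ag square-integrable martingale; here I use that the Brownian integral is continuous, so its terminal increment coincides with the increment of its left limits. The increments of $A$ and $B$ are nonnegative. For the martingale part, since ${}^p\mathcal{M} = \mathcal{M}_{-}$ one has $\mathcal{M}_{\tau^-} = \mathbb{E}[\mathcal{M}_\tau \mid \mathcal{F}_{\tau^-}]$; combining the predictable stopping theorem (Theorem~4.5 in \cite{Nikeghbali}) with the tower property and $\mathcal{F}_{S^-} \subseteq \mathcal{F}_{\tau^-}$ gives $\mathbb{E}[\mathcal{M}_{\tau^-} - \mathcal{M}_{S^-} \mid \mathcal{F}_{S^-}] = 0$. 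Taking $\mathbb{E}[\,\cdot \mid \mathcal{F}_{S^-}]$ above and using that $Y_S$ is $\mathcal{F}_{S^-}$-measurable ($Y$ predictable, $S$ predictable) yields $Y_S \geq \mathbb{E}[Y_\tau \mid \mathcal{F}_{S^-}]$. Integrability of $Y_\tau$ for bounded predictable $\tau$ is immediate from $Y \in \mathbf{S}^{2,p}$, so $\mathcal{P}re[\xi]$ is a predictable strong supermartingale.

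For monotonicity I would invoke the minimality encoded in the Skorokhod conditions, namely that $\mathcal{P}re[\xi]$ is the \emph{smallest} predictable strong supermartingale dominating $\xi$ (its predictable Snell envelope), a characterization established in \cite{Bouhadou}. Given $\xi \leq \xi'$, the two preceding steps applied to $\xi'$ show that $\mathcal{P}re[\xi']$ is a predictable strong supermartingale with $\mathcal{P}re[\xi'] \geq \xi' \geq \xi$; minimality of $\mathcal{P}re[\xi]$ among strong supermartingales dominating $\xi$ then forces $\mathcal{P}re[\xi] \leq \mathcal{P}re[\xi']$. The hard part will be the vanishing of the conditional increment of the martingale part: because the filtration is not quasi-left-continuous, $\mathcal{M}$ may genuinely jump at predictable times, so the passage through left limits $\mathcal{M}_{\cdot^-}$ and the $\mathcal{F}_{S^-}$-conditioning must be handled carefully via the predictable stopping theorem rather than ordinary optional sampling. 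Monotonicity is then not computational but rests entirely on the Snell-envelope minimality, which I would quote from \cite{Bouhadou} rather than re-derive.
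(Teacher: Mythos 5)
Your proof is correct, but it is organized differently from the paper's. The paper proves all three assertions in one stroke by quoting Theorem 2 of \cite{Bouhadou}, which identifies $\mathcal{P}re[\xi]_S$ with the predictable value function $\esssup_{\tau \in \mathcal{T}^p_S} E(\xi_\tau \mid \mathcal{F}_{S^-})$: monotonicity and domination (take $\tau = S$) are then immediate from monotonicity of conditional expectation and of the essential supremum, and the supermartingale property comes with the Snell-envelope characterization (Lemma 15 of \cite{Bouhadou}). You instead read the domination off constraint (i) of Definition~\ref{def2} and establish the strong supermartingale property by a direct computation on the equation --- subtracting it at $S$ and at $\tau$, killing the martingale increment via the predictable stopping theorem, and using the nondecreasingness of $A$ and $B$ --- reserving the appeal to \cite{Bouhadou} (minimality of the predictable Snell envelope) for the monotonicity alone. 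Both routes are sound; the paper's is shorter because the $\esssup$ formula delivers monotonicity for free, whereas yours is more self-contained on two of the three claims and makes explicit where the failure of quasi-left-continuity actually enters (in passing from $\mathcal{M}_\tau$ to $\mathcal{M}_{\tau^-}$ through $E[\mathcal{M}_\tau \mid \mathcal{F}_{\tau^-}] = \mathcal{M}_{\tau^-}$). Note only that the minimality you invoke is precisely the Snell-envelope characterization the paper also cites, so neither argument is fully independent of \cite{Bouhadou}.
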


\begin{proof} 
By definition, $\mathcal{P}re[\xi]$ is the first component of the solution of the predictable reflected BSDE (\ref{PRBSDE}). Hence, Theorem 2 in \cite{Bouhadou} shows that $\mathcal{P}re[\xi]$ is the predictable value function associated with the reward $\xi$, that is for each stopping time $S \in \mathcal{T}^p_0$ 
\begin{equation*}
\mathcal{P}re[\xi]_S = \esssup_{\tau \in \mathcal{T}^p_S} E(\xi_\tau \vert \mathcal{F}_{S^-}).
\end{equation*}
Thus, the operator $\mathcal{P}re$ is nondecreasing and the process $(\mathcal{P}re[\xi])_{t\in [0,T]}$  is characterized as the predictable Snell envelope associated with the process $(\xi)_{t\in [0,T]}$, that is the smallest strong predictable supermartingale greater than or equal to $\xi$ (cf.  \cite{Bouhadou} Lemma 15, p. 34) and the lemma follows.
\end{proof}

\begin{remark} \label{pre=}
If $\xi \in \mathcal{T}^p_0$ is a predictable strong supermartingale, then $\mathcal{P}re[\xi]=\xi$. Indeed, it remains to show that
$\mathcal{P}re[\xi] \leq \xi$. Let $S \in \mathcal{T}^p_0 $, since $\xi $ is a predictable strong supermartingale, for each stopping time $\tau \in \mathcal{T}^p_S$, we have
$$ E(\xi_\tau \vert \mathcal{F}_{S^-}) \leq \xi_S.$$
By definition of the essential supremum, we get $\mathcal{P}re[\xi]_S \leq \xi_S$. Consequently, $\mathcal{P}re[\xi] = \xi$.
\end{remark}

\begin{remark} \label{cv seq of p.s.s}
The limit of a nondecreasing sequence of predictable strong supermartingales is also a predictable strong supermartingale (It can be shown using Lebesgue's dominated convergence theorem and the fact that every trajectory of a predictable strong supermartingale is bounded on all compact interval of $\mathbb{R}^+$). 
\end{remark}

\begin{proofa} Let $n\in \mathbb{N}$, we begin by proving that the processes $\mathbf{J}^{p,n}$ and $\bar{\mathbf{J}}^{p,n}$ are valued in $[0,+\infty]$. By definition, we have: 
\begin{equation}\label{T}
\mathbf{J}^{p,n}_T=\bar{\mathbf{J}}^{p,n}_T=0 \,\,\,\, \text{a.s. for each} \,\,\,\, n.
\end{equation} 
Hence, $\mathbf{J}^{p,n}$ and $\bar{\mathbf{J}}^{p,n}$ are non negative since they are predictable strong supermartingales. From $\tilde{\xi}^{p,g}_T=\tilde{\zeta}^{p,g}_T=0$, it follows that $(\bar{\mathbf{J}}^{p,n} + \tilde{\xi}^{g,p} ) \textbf{1}_{[0,T)}=(\bar{\mathbf{J}}^{p,n} + \tilde{\xi}^{g,p} )$ and $(\mathbf{J}^{p,n}  -\tilde{\zeta}^{g,p} ) \textbf{1}_{[0,T)}=(\mathbf{J}^{p,n}  -\tilde{\zeta}^{g,p} )$. 
We prove that $(\mathbf{J}^{p,n})_{n\in \mathbb{N}}$ and $(\bar{\mathbf{J}}^{p,n})_{n\in \mathbb{N}}$ are non decreasing sequences of processes. \\

We have $\mathbf{J}^{p,0}=0 \leq \mathbf{J}^{p,1}$ and $\bar{\mathbf{J}}^{p,0}=0 \leq \bar{\mathbf{J}}^{p,1}$. Suppose that $\mathbf{J}^{p,n-1} \leq \mathbf{J}^{p,n}$ and $\bar{\mathbf{J}}^{p,n-1} \leq \bar{\mathbf{J}}^{p,n}$. The nondecreasingness of the operator $\mathcal{P}re$ gives 
\begin{align*}
&\mathcal{P}re[\mathbf{J}^{p,n-1}  -\tilde{\zeta}^{g,p} ] \leq \mathcal{P}re[\mathbf{J}^{p,n}  -\tilde{\zeta}^{g,p} ] \\ &\mathcal{P}re[\bar{\mathbf{J}}^{p,n-1} + \tilde{\xi}^{g,p}] \leq \mathcal{P}re[\bar{\mathbf{J}}^{p,n} + \tilde{\xi}^{g,p}].
\end{align*}
Thus, $\mathbf{J}^{p,n-1} \leq \mathbf{J}^{p,n}$ and $\bar{\mathbf{J}}^{p,n-1} \leq \bar{\mathbf{J}}^{p,n}$, which is the desired conclusion. \\

The processes $\mathbf{J}^p := \lim \uparrow \mathbf{J}^{p,n}  $ and $ \bar{\mathbf{J}}^p := \lim \uparrow \bar{\mathbf{J}}^{p,n}$ are predictable (valued in $[0,+\infty]$) as the limit of sequences of predictable nonnegative processes. By (\ref{T}), we get $\mathbf{J}^p _T = \bar{\mathbf{J}}^p_T=0 $ a.s. Moreover, $\mathbf{J}^p $ and $\bar{\mathbf{J}}^p$ are strong supermartingales valued in $[0, +\infty]$ (cf. Remark \ref{cv seq of p.s.s}).\\
\hspace*{0.5 cm} We next prove that $\mathbf{J}^p $ and $ \bar{\mathbf{J}}^p$ belong to $\mathbf{S}^{2,p}$. For this purpose, consider $H^{p}$ and $\bar{H}^{p}$ the nonnegative predictable strong supermartingales that come from Mokobodzki's condition for $(\xi,\zeta)$. Then, we define two processes $H^{g,p}$ and $\bar{H}^{g,p}$ as follows: 
\begin{align*}
H^{g,p}_t &:= H^p_t + E[\xi^-_T \vert \mathcal{F}_{t^-}] + E[ \int_t^T g^-(s) ds \vert \mathcal{F}_{t^-}] \\
\bar{H}^{g,p}_t &:= \bar{H}^p_t + E[\xi^+_T \vert \mathcal{F}_{t^-}] + E[ \int_t^T g^+(s) ds, \vert \mathcal{F}_{t^-}].
\end{align*}
It is easy to check that $H^{g,p}$ and $\bar{H}^{g,p}$ are nonnegative predictable strong supermartingales in $\mathbf{S}^{2,p}$. From Mokobodzki's condition, we get
\begin{equation} \label{M}
\tilde{\xi}^{g,p} \leq H^{g,p} - \bar{H}^{g,p} \leq\tilde{\zeta}^{g,p}.
\end{equation}
Let us now show by induction that $\mathbf{J}^{p,n} \leq H^{g,p}$ and $\bar{\mathbf{J}}^{p,n} \leq \bar{H}^{g,p}$, for all $n \in \mathbb{N}$. First, we have $\mathbf{J}^{p,0}=0 \leq H^{g,p}$ and $\bar{\mathbf{J}}^{p,0}=0 \leq \bar{H}^{g,p}$. Suppose that, for a fixed $n \in \mathbb{N}$, we have $\mathbf{J}^{p,n} \leq H^{g,p}$ and $\bar{\mathbf{J}}^{p,n} \leq \bar{H}^{g,p}$. From equation (\ref{M}), we get $\mathbf{J}^{p,n} \leq \bar{H}^{g,p} + \tilde{\zeta}^{g,p} $ and $ \bar{\mathbf{J}}^{p,n} \leq  H^{g,p} - \tilde{\xi}^{g,p}$. As the operator $\mathcal{P}re$ is a non decreasing operator (see Lemma \ref{incr opera}), we get   
\begin{align*}
 \mathbf{J}^{p,n+1} = \mathbf{P}re[\bar{\mathbf{J}}^{p,n} + \tilde{\xi}^{g,p}] \leq \mathcal{P}re[H^{g,p}] \,\,\,\, \text{and}  \,\,\,\,  \bar{\mathbf{J}}^{p,n+1} = \mathbf{P}re[\mathbf{J}^{p,n} - \tilde{\zeta}^{g,p}] \leq \mathcal{P}re[\bar{H}^{g,p}].
\end{align*}\normalsize
Since $H^{g,p} $ and $ \bar{H}^{g,p}$ are predictable strong supermartingales, it follows by Remark \ref{pre=} that $\mathcal{P}re[H^{g,p}]= H^{g,p}$ and $\mathcal{P}re[\bar{H}^{g,p}]= \bar{H}^{g,p}$. Hence, $\mathbf{J}^{p,n+1} \leq H^{g,p}$ and $\bar{\mathbf{J}}^{p,n+1} \leq \bar{H}^{g,p}$, which is the desired conclusion.\\
By letting $n$ tend to $+ \infty$ in  $\mathbf{J}^{p,n} \leq H^{g,p}$ and $\bar{\mathbf{J}}^{p,n} \leq \bar{H}^{g,p}$, we get  $\mathbf{J}^{p} \leq H^{g,p}$ and $\bar{\mathbf{J}}^{p} \leq \bar{H}^{g,p}$. Hence, $\mathbf{J}^{p}$ and $\bar{\mathbf{J}}^{p}$ belong to $\mathbf{S}^{2,p}$. \\

The proof is completed by showing that the processes $\mathbf{J}^{p}$ and $\bar{\mathbf{J}}^{p}$ satisfy the system (\ref{coupled*}). Note that $(\bar{\mathbf{J}}^{p,n} + \tilde{\xi}^{g,p})_{n\in \mathbb{N}}$ is a non decreasing sequence of processes belonging to $\mathbf{S}^{2,p}$. As the operator $\mathcal{P}re$ is a nondecreasing, the sequence $(\mathcal{P}re[\bar{\mathbf{J}}^{p,n} + \tilde{\xi}^{g,p}])_{n\in \mathbb{N}}$ is also nondecreasing. Hence, for each $n \in \mathbb{N}$, the following property 
$$\mathbf{J}^{p,n+1} =\mathcal{P}re[\bar{\mathbf{J}}^{p,n} + \tilde{\xi}^{g,p}] \leq \mathcal{P}re[\bar{\mathbf{J}}^p + \tilde{\xi}^{g,p}], $$
holds. By letting $n$ go to $+\infty$, we get 
\begin{equation}\label{<}
\mathbf{J}^{p} \leq \mathcal{P}re[\bar{\mathbf{J}}^p + \tilde{\xi}^{g,p}].
\end{equation}
 Now, by definition of $\mathcal{P}re[\bar{\mathbf{J}}^{p,n} + \tilde{\xi}^{g,p}]$ as the solution of the predictable reflected BSDE with obstacle $\bar{\mathbf{J}}^{p,n} + \tilde{\xi}^{g,p}$, we have $\mathcal{P}re[\bar{\mathbf{J}}^{p,n} + \tilde{\xi}^{g,p}] \geq \bar{\mathbf{J}}^{p,n} + \tilde{\xi}^{g,p}$, for all $n\in\mathbb{N}$. Thus, by letting $n$ go to $+\infty$, we get $\mathbf{J}^{p} \geq \bar{\mathbf{J}}^p + \tilde{\xi}^{g,p}$. Hence, 
\begin{equation}
\mathcal{P}re[\mathbf{J}^{p}] \geq \mathcal{P}re[\bar{\mathbf{J}}^p + \tilde{\xi}^{g,p}]
\end{equation}
Since $\mathbf{J}^{p}$ is a predictable strong supermartingale, Remark \ref{pre=} implies $\mathcal{P}re[\mathbf{J}^{p}]= \mathbf{J}^{p}$. From inequality (\ref{<}), It follows that $\mathbf{J}^{p}= \mathcal{P}re[\bar{\mathbf{J}}^p + \tilde{\xi}^{g,p}]$.
 We show similarly that  $\bar{\mathbf{J}}^{p}= \mathcal{P}re[\mathbf{J}^p + \tilde{\zeta}^{g,p}]$. Since, $\mathbf{J}^{p}_T= \bar{\mathbf{J}}^{p}_T=0$, we conclude that $\mathbf{J}^{p}$ and $\bar{\mathbf{J}}^{p}$ are solutions of the system  (\ref{coupled*}), and the lemma follows.
\end{proofa}

\begin{acknowledgements}
The authors thank the anonymous Referee for his valuable comments and suggestions from which the manuscript greatly benefited.
\end{acknowledgements}

%
%



%
%




\end{document}